\numberwithin{equation}{section}
\newtheorem{thm}{Theorem}[section]
\newtheorem{cor}[thm]{Corollary}
\newtheorem{lem}[thm]{Lemma}
\newtheorem{prop}[thm]{Proposition}
\newtheorem{defn}[thm]{Definition}
\newtheorem{defrem}[thm]{Definition and Remarks }
\newtheorem{Con*}[thm]{Conjectuer}
\newcommand{\Ann}{\mbox{Ann}\,}
\newcommand{\Ker}{\mbox{Ker}\,}
\newcommand{\Tot}{\mbox{Tot}\,}
\newcommand{\Nd}{\mbox{Ndim}\,}
\newcommand{\h}{\mbox{ht}\,}
\newcommand{\E}{\mbox{E}}
\newcommand{\uhom}{{\mathbf R}\Hom}
\newcommand{\utp}{\otimes^{\mathbf L}}
\newcommand{\ugamma}{{\mathbf R}\Gamma}
\newcommand{\Lam}{{\mathbf L}\Lambda}
\renewcommand{\H}{\mbox{H}}
\newcommand{\V}{\mbox{V}}
\newcommand{\D}{\mbox{D}}
\newcommand{\fa}{\mathfrak{a}}
\newcommand{\fb}{\mathfrak{b}}
\newcommand{\fm}{\mathfrak{m}}
\newcommand{\fn}{\mathfrak{n}}
\renewcommand{\Im}{\mbox{Im}\,}
\def\Id{\operatorname{\mathsf{Id}}}
\def\id{\operatorname{\mathsf{id}}}
\def\wid{\operatorname{\mathsf{width}}}
\def\fd{\operatorname{\mathsf{fd}}}
\def\cd{\operatorname{\mathsf{cd}}}
\def\T{\operatorname{\mathsf{T}}}
\def\Ext{\operatorname{\mathsf{Ext}}}
\def\gr{\operatorname{\mathsf{grade}}}
\def\depth{\operatorname{\mathsf{depth}}}
\def\Hom{\operatorname{\mathsf{Hom}}}
\def\dim{\operatorname{\mathsf{dim}}}
\def\Tor{\operatorname{\mathsf{Tor}}}
\DeclareMathOperator{\Supp}{Supp}
\begin{document}

\title[]
{some characterizations of Relative Sequentially Cohen-Macaulay and Relative Cohen-Macaulay modules }

\author[M. Rahro Zargar]{Majid Rahro Zargar}


\address{Majid Rahro Zargar, Department of Engineering Sciences, Faculty of Advanced Tech-
nologies, University of Mohaghegh Ardabili, Namin, Ardabil, Iran,}
\email{zargar9077@gmail.com}
\email{m.zargar@uma.ac.ir}
\subjclass[2010]{13D45, 13C14, 13D02}
\keywords{Local Cohomoloy, Completion, Relative sequentially Cohen-Macaulay module, Relative Cohen-Macaulay module. }

\begin{abstract}Let $M$ be an $R$-module over a Noetherian ring $R$ and $\fa$ be an ideal of $R$ with $c=\cd(\fa,M)$. First, we prove that
 $M$ is finite $\fa$-relative Cohen-Macaulay if and only if $\H_{i}(\Lam_{\fa}(\H_{\fa}^c(M)))=0$ for all $i\neq c$ and $\H_{c}(\Lam_{\fa}(\H_{\fa}^c(M)))\cong
 \widehat{M}^{\fa}$. Next, over an $\fa$-relative Cohen-Macaulay local ring $(R,\fm)$, we provide a characterization of $\fa$-relative sequentially Cohen-Macaulay modules $M$  in terms of $\fa$-relative Cohen-Macaulayness of the $R$-modules $\Ext^{d-i}_{R}(M,\D_{\fa})$ for all $i\geq 0$, where $\D_{\fa}=\Hom_R(\H^d_{\fa}(R),\E(R/\fm))$ and $d=\cd(\fa,R)$. Finally, we provide another characterization of $\fa$-relative sequentially Cohen-Macaulay modules $M$ in terms of vanishing of the local homology modules $\H_{j}(\Lam_{\fa}(\H_{\fa}^i(M)))=0$ for all $0\leq i\leq c$ and for all $j\neq i$.

\end{abstract}
\maketitle


\section{Introduction}
Throughout this paper, $R$ is a commutative Noetherian ring and $\fa$ is a proper ideal of $R$. In the case where $R$ is local
 with the maximal ideal $\fm$, $\widehat{R}$ denotes the $\fm$-adic completion of $R$, $\E_R(R/\fm)$ denotes the injective hull of the residue field $R/\fm$ and $\widehat{R}^{\fa}$ denotes the $\fa$-adic completion of $R$. Also, for an $R$-module $M$, the $R$-modules $\H^i_{\fa}(M)$ for all $i$ denote the local cohomology modules, and $\H_{i}(\Lam_{\fa}(M))$ for all $i$ denote the left derived functor of the $\fa$-adic completion $\Lambda_{\fa}(-)$. The concept of sequentially Cohen- Macaulay was defined by combinatorial commutative algebraists \cite[3.9]{STA} to answer a basic question to find a non-pure
generalization of the concept of a Cohen-Macaulay module so that the face ring of a shellable (non-pure) simplicial
complex has this property. This concept was then applied by commutative algebraists to study some algebraic invariants or
special algebras that come from graphs. Recall that a finitely generated $R$-module $M$ over a local ring $R$ is called sequentially Cohen-Macaulay if there exists a finite filtration

$0=M_0 \subset M_1 \subset\dots \subset M_{r-1}\subset M_{r}=M$ of $M$ by submodules of $M$ such that each quotient module $M_{i}/M_{i-1}$ is a Cohen-Macaulay $R$-module and that $\dim(M_1 /M_0)<\dim(M_2 /M_1)<\dots<\dim(M_{r-1} /M_{r-2})<\dim(M/M_{r-1})$( For more details see \cite{SC3}).
There are some nice characterizations of the concept of sequentially Cohen-Macaulay modules which one of them is the following basic theorem of J. Herzog and D. Popescu in \cite[Theorem 2.4]{HEP}:

\begin{thm}Let $(R,\fm)$ be a $d$-dimensional Cohen-Macaulay local ring with the canonical $R$-module $\omega_{R}$ and $M$ be a finitely generated $R$-module. Then, the following statements are equivalent:
\begin{itemize}
\item[(i)]{{$M$ is sequentially Cohen-Macaulay.}}
\item[(ii)]{{$\Ext^{d-i}_{R}(M,\omega_{R})$ is zero or Cohen-Macaulay of dimension $i$ for all $i\geq 0.$}}
\end{itemize}
\end{thm}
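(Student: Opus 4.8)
My plan rests entirely on the dualizing functor $(-)^{\dagger}:=\uhom_R(-,\omega_R)$. Since $R$ is Cohen--Macaulay local with a canonical module, $\omega_R$ (viewed in a single degree) is a dualizing complex for $R$, so biduality holds: $M^{\dagger\dagger}\simeq M$ for every bounded complex $M$ with finitely generated homology; in particular no passage to the completion is needed. I would combine this with the standard fact that for a nonzero Cohen--Macaulay $R$-module $G$ of dimension $e$ one has $\Ext^{j}_R(G,\omega_R)=0$ for $j\neq d-e$ and $\Ext^{d-e}_R(G,\omega_R)=\omega_G$ is Cohen--Macaulay of dimension $e$; equivalently $G^{\dagger}$ is quasi-isomorphic to the module $\omega_G$ placed in cohomological degree $d-e$.

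For (i)$\Rightarrow$(ii) I would apply $(-)^{\dagger}$ to a sequentially Cohen--Macaulay filtration $0=M_0\subset M_1\subset\cdots\subset M_r=M$, writing $G_j:=M_j/M_{j-1}$, Cohen--Macaulay of dimension $d_j$ with $d_1<\cdots<d_r$. Dualizing $0\to M_{j-1}\to M_j\to G_j\to 0$ produces an exact triangle; since $G_j^{\dagger}$ lies in cohomological degree $d-d_j$ while, by the strict inequalities $d_i<d_j$ for $i<j$, the cohomology of $M_{j-1}^{\dagger}$ is supported in degrees $>d-d_j$, the cohomology long exact sequences of these triangles break into isomorphisms. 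Induction on $j$ then yields that $\Ext^{k}_R(M,\omega_R)=\H^{k}(M^{\dagger})$ equals $\omega_{G_j}$ for $k=d-d_j$ and vanishes otherwise; substituting $k=d-i$ is exactly statement (ii).

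For the converse I would feed the isomorphism $M^{\dagger\dagger}\simeq M$ into the hyperhomology (hyper-$\Ext$) spectral sequence
\[
E_2^{p,q}=\Ext^{p}_R\!\big(\Ext^{-q}_R(M,\omega_R),\,\omega_R\big)\ \Longrightarrow\ \H^{p+q}(M^{\dagger\dagger})=\H^{p+q}(M),
\]
whose abutment is $M$ in degree $0$ and is zero in all other degrees. By hypothesis (ii) each $\Ext^{-q}_R(M,\omega_R)$ is zero or Cohen--Macaulay of dimension $d+q$, so by the fact recalled above $E_2^{p,q}=0$ unless $p=-q$. Thus $E_2$ is concentrated on the single antidiagonal $p+q=0$, the spectral sequence degenerates at $E_2$, and the induced finite filtration of $M$ has graded pieces $E_2^{p,-p}=\Ext^{p}_R\!\big(\Ext^{d-p}_R(M,\omega_R),\,\omega_R\big)=\omega_{\Ext^{d-p}_R(M,\omega_R)}$, each of which is zero or Cohen--Macaulay of dimension $d-p$. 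Discarding the zero steps and reading the filtration from the innermost nonzero term outward, its successive Cohen--Macaulay subquotients have strictly increasing dimensions, which is exactly the definition of $M$ being sequentially Cohen--Macaulay.

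The step I expect to demand the most care is the final one in (ii)$\Rightarrow$(i): one has to fix the conventions relating $E_\infty^{p,n-p}$ to the filtration on $\H^n$ precisely enough to be certain that the filtration produced on $M$ is finite, exhaustive, and --- the essential point --- has its Cohen--Macaulay graded pieces arranged in strictly increasing order of dimension. This spectral-sequence collapse is what makes it possible to bypass the more laborious alternative route for (ii)$\Rightarrow$(i) --- strip off $\Gamma_{\fm}(M)$, then divide by an element of $\fm$ regular on $R$, on $\omega_R$, on $M$ and on every $\Ext^{d-i}_R(M,\omega_R)$, and induct on $\dim M$ --- which would reduce the implication to the ascent of the sequentially Cohen--Macaulay property along a regular parameter, and that ascent (rather than the evident descent) is where the real difficulty in that approach would lie.
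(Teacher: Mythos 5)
Your argument is correct, and its two engines are the same as the paper's: for (i)$\Rightarrow$(ii) an induction along the filtration, dualizing each short exact sequence and using that the strict inequality of dimensions forces the connecting maps to vanish (this is exactly the paper's Lemma \ref{Lem}, transplanted from $\D_{\fa}$ to $\omega_R$); for (ii)$\Rightarrow$(i) a biduality double-complex/spectral-sequence collapse onto the antidiagonal. Where you genuinely diverge is in the framework for the second implication. The paper cannot invoke biduality for its relative canonical module $\D_{\fa}$, so it proves the general Lemma \ref{22} by hand from the double complex $\Hom_R(\Hom_R(P_p,D),I^{c-q})$, which forces the extra hypotheses (vanishing of the double-Ext modules off the diagonal and of the local cohomology of the double duals) and produces a statement about $M\otimes_R\widehat R$; the corollary is then pulled back to $M$ via $\omega_{\widehat R}\cong\D_{\fm\widehat R}$, base change of Ext, and faithful flatness of $\widehat R$ (together with $\Hom_R(\omega_R,\omega_R)\cong R$). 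You instead use that $\omega_R$ itself is a dualizing complex for $R$, so $\uhom_R(\uhom_R(M,\omega_R),\omega_R)\simeq M$ holds already over $R$ and the hyper-Ext spectral sequence does all the work; the paper's auxiliary hypotheses become automatic because $\Ext^{d-e}_R(G,\omega_R)=\omega_G$ is again Cohen--Macaulay for $G$ Cohen--Macaulay of dimension $e$. This is cleaner and avoids the completion detour, at the cost of quoting that a Cohen--Macaulay local ring with a canonical module is a homomorphic image of a Gorenstein ring (Foxby--Reiten--Sharp), which is what legitimizes biduality and should be cited; the paper's heavier route buys the more general relative statement (Theorem \ref{1}), of which this corollary is a special case. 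Two small repairs: on the antidiagonal your graded piece is $E_2^{p,-p}=\Ext^p_R(\Ext^p_R(M,\omega_R),\omega_R)\cong\omega_{\Ext^p_R(M,\omega_R)}$, not $\omega_{\Ext^{d-p}_R(M,\omega_R)}$ (the stated dimension $d-p$ is nevertheless correct); and the filtration worry you flag resolves just as in the paper's Lemma \ref{22}: the decreasing filtration $F^{\bullet}$ on $\H^0=M$ has $F^p/F^{p+1}\cong E_\infty^{p,-p}$ Cohen--Macaulay of dimension $d-p$, so the chain $F^d\subseteq F^{d-1}\subseteq\cdots\subseteq F^0=M$, after discarding zero quotients, is precisely a sequentially Cohen--Macaulay filtration.
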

The author, in \cite{MZ}, introduced the notion of relative Cohen-Macaulay modules with respect to an ideal $\fa$ as a natural generalization of the concept of Cohen-Macaulay modules over a local ring. Indeed, we say that an $R$-module $M$ is $\fa$-relative Cohen-Macaulay if there is precisely one non-vanishing local cohomology module of $M$ with respect to $\fa$. Also, in \cite{MZ2}, we studied the $R$-module $\D_{\fa}=\Hom_R(\H^c_{\fa}(R),\E(R/\fm))$ in the case where $R$ is a relative Cohen-Macaulay local ring with respect to $\fa$ and $c=\h_{R}\fa$. Indeed, we showed that these modules behave like the canonical modules over Cohen-Macaulay local rings. On the other hand, as a generalization of the notion of sequentially Cohen-Macaulay, the concept of relative sequentially Cohen-Macaulay was defined and studied by some authors ( see \cite{ASN}, \cite{ARH}, \cite{ARH1} and for definition see \ref{*33}). Therefore, in section 3, among other things we were able to provide the following result which can be considered a suitable generalization of the above-mentioned theorem of J. Herzog and D. Popescu.
\begin{thm}Let $(R,\fm)$ be an $\fa$-relative Cohen-Macaulay local ring with $\cd(\fa,R)=c$. Then, the following statements hold:
 \begin{itemize}
\item[(i)]{{ If $M$ is finite $\fa$-relative sequentially Cohen-Macaulay, then $\Ext^{c-i}_{R}(M,\D_{\fa})$ is zero or is $\fa$-relative Cohen-Macaulay of cohomological dimension $i$ for all $i\geq 0$.}}
\item[(ii)]{{ If for all $i\geq 0$, $\Ext^{c-i}_{R}(M,\D_{\fa})$ is zero or is $\fa$-relative Cohen-Macaulay of cohomological dimension $i$ and \emph{$\H_{\fa}^j(\Ext^{c-i}_{R}(\Ext^{c-i}_{R}(M,\D_{\fa}),\D_{\fa}))=0$} for all $j>i$, then $M\otimes_R\widehat R$ is $\fa$-relative sequentially Cohen-Macaulay.}}
\end{itemize}
\end{thm}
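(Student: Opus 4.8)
The plan is to build everything out of a relative version of local duality and a biduality spectral sequence, with $\uhom_R(-,\D_{\fa})$ playing the role of the dualizing functor. Since $R$ is $\fa$-relative Cohen-Macaulay with $\cd(\fa,R)=c$, the complex $\ugamma_{\fa}(R)$ has cohomology only in degree $c$, so $\ugamma_{\fa}(R)\simeq\H^c_{\fa}(R)[-c]$ and hence $\D_{\fa}\simeq\ugamma_{\fa}(R)^{\vee}[-c]$, where $(-)^{\vee}:=\Hom_R(-,\E(R/\fm))$. Using the adjunction $\uhom_R(X,\uhom_R(\ugamma_{\fa}(R),\E(R/\fm)))\simeq\uhom_R(X\utp_R\ugamma_{\fa}(R),\E(R/\fm))$ together with $X\utp_R\ugamma_{\fa}(R)\simeq\ugamma_{\fa}(X)$, I would first record the formula $\uhom_R(X,\D_{\fa})\simeq\ugamma_{\fa}(X)^{\vee}[-c]$, valid for every $R$-complex $X$; taking cohomology gives the relative local duality $\Ext^{c-i}_R(X,\D_{\fa})\cong\H^i_{\fa}(X)^{\vee}$. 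Iterating this formula and invoking the torsion--completion exchange isomorphisms $\ugamma_{\fa}(Y^{\vee})\simeq\Lam_{\fa}(Y)^{\vee}$, $\ugamma_{\fa}(Y)^{\vee}\simeq\Lam_{\fa}(Y^{\vee})$ and the MGM identity $\Lam_{\fa}\circ\ugamma_{\fa}\simeq\Lam_{\fa}$, I obtain the biduality
\[
\uhom_R\big(\uhom_R(M,\D_{\fa}),\D_{\fa}\big)\ \simeq\ \Lam_{\fa}(M^{\vee\vee})\ \simeq\ \Lam_{\fa}(M\otimes_R\widehat R)\ \simeq\ M\otimes_R\widehat R,
\]
where the last isomorphism holds because $M\otimes_R\widehat R$ is finitely generated over $\widehat R$ and $\widehat R$ is $\fa\widehat R$-adically complete, so its derived $\fa$-adic completion is itself.

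Next I would feed $M$ into the hyper-$\Ext$ spectral sequence of the composite $\uhom_R(\uhom_R(-,\D_{\fa}),\D_{\fa})$:
\[
E_2^{p,q}=\Ext^p_R\big(\Ext^{-q}_R(M,\D_{\fa}),\D_{\fa}\big)\ \Longrightarrow\ \H^{p+q}(M\otimes_R\widehat R),
\]
so the abutment is $M\otimes_R\widehat R$ in total degree $0$ and vanishes otherwise. Writing $C^i:=\Ext^{c-i}_R(M,\D_{\fa})$, a nonzero $E_2$-term lives at $q=i-c$ with $0\le i\le c$, where $E_2^{p,i-c}=\Ext^p_R(C^i,\D_{\fa})$. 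When $C^i\ne0$ the hypothesis says $C^i$ is $\fa$-relative Cohen-Macaulay of cohomological dimension $i$, i.e.\ $\ugamma_{\fa}(C^i)\simeq\H^i_{\fa}(C^i)[-i]$; plugging this into the formula above yields $\uhom_R(C^i,\D_{\fa})\simeq\H^i_{\fa}(C^i)^{\vee}[i-c]$, so $\Ext^p_R(C^i,\D_{\fa})=0$ unless $p=c-i$. Hence every nonzero $E_2$-term lies on the antidiagonal $p+q=0$, all higher differentials vanish for degree reasons, the sequence collapses at $E_2$, and $M\otimes_R\widehat R$ inherits a finite filtration $0=M_{-1}\subseteq M_0\subseteq\cdots\subseteq M_c=M\otimes_R\widehat R$ with
\[
M_k/M_{k-1}\ \cong\ E_2^{c-k,\,k-c}\ =\ \Ext^{c-k}_R(C^k,\D_{\fa})\ \cong\ \H^k_{\fa}(C^k)^{\vee}\ =:\ K^k ,
\]
and $K^k=0$ precisely when $C^k=0$.

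It then suffices to show every nonzero $K^k$ is $\fa$-relative Cohen-Macaulay of cohomological dimension $k$: after discarding the trivial steps, the filtration above becomes a filtration of $M\otimes_R\widehat R$ by submodules whose consecutive quotients are $\fa$-relative Cohen-Macaulay of strictly increasing cohomological dimension, i.e.\ an $\fa$-relative sequentially Cohen-Macaulay filtration. For this I would use $\H^j_{\fa}(Y^{\vee})\cong(L_j\Lambda_{\fa}(Y))^{\vee}$ (a consequence of $\ugamma_{\fa}(Y^{\vee})\simeq\Lam_{\fa}(Y)^{\vee}$) with $Y=\H^k_{\fa}(C^k)$. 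From $\ugamma_{\fa}(C^k)\simeq\H^k_{\fa}(C^k)[-k]$, applying $\Lam_{\fa}$ and using $\Lam_{\fa}\ugamma_{\fa}\simeq\Lam_{\fa}$ gives $\Lam_{\fa}(\H^k_{\fa}(C^k))\simeq\Lam_{\fa}(C^k)[k]$, hence $L_j\Lambda_{\fa}(\H^k_{\fa}(C^k))\cong L_{j-k}\Lambda_{\fa}(C^k)$. For $j<k$ the right-hand side is $\H^{k-j}(\Lam_{\fa}(C^k))=0$, so $\H^j_{\fa}(K^k)=0$ automatically; for $j>k$, the vanishing $\H^j_{\fa}(K^k)=0$ is exactly the extra hypothesis $\H^j_{\fa}(\Ext^{c-k}_R(\Ext^{c-k}_R(M,\D_{\fa}),\D_{\fa}))=0$, and it forces $\Lam_{\fa}(C^k)$ to be concentrated in degree $0$; for $j=k$ we get $\H^k_{\fa}(K^k)\cong\Lambda_{\fa}(C^k)^{\vee}$, which is nonzero because $\ugamma_{\fa}(\Lambda_{\fa}(C^k))\simeq\ugamma_{\fa}(C^k)\simeq\H^k_{\fa}(C^k)[-k]\ne0$. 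Thus $K^k$ has a unique nonvanishing local cohomology module with respect to $\fa$, in degree $k$, as needed.

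The main obstacle is the first paragraph: establishing the relative biduality, i.e.\ identifying $\uhom_R(\uhom_R(M,\D_{\fa}),\D_{\fa})$ with $M\otimes_R\widehat R$ on the nose, since this is precisely what makes the spectral sequence converge to something usable; once it is in hand the rest is bookkeeping with the Greenlees--May and MGM isomorphisms. It is worth noting that the extra hypothesis in (ii) is doing genuine work here: it compensates for the possible non--finite generation of $C^k=\Ext^{c-k}_R(M,\D_{\fa})$, equivalently for the non-vanishing of the higher left derived completions $L_{>0}\Lambda_{\fa}(C^k)$, a phenomenon that does not occur in the classical case $\fa=\fm$.
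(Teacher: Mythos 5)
Your outline covers only part (ii): part (i) is never proved. The relative local duality you set up, $\Ext^{c-i}_R(X,\D_{\fa})\cong\H^i_{\fa}(X)^{\vee}$, reduces (i) to showing that $\H^i_{\fa}(M)^{\vee}$ is zero or $\fa$-relative Cohen-Macaulay of cohomological dimension $i$ whenever $M$ is finite $\fa$-relative sequentially Cohen-Macaulay, and you give no argument for that; the paper does this separately (Lemma \ref{Lem}), by induction on the length of the filtration, using Theorem \ref{3}(iv) and the long exact $\Ext$-sequence of $0\to M_1\to M\to M/M_1\to 0$. For part (ii) your architecture is essentially that of Lemma \ref{22}: the biduality spectral sequence with respect to $\D_{\fa}$ collapses onto the antidiagonal and filters $M\otimes_R\widehat R$ with quotients $\Ext^{c-k}_R(\Ext^{c-k}_R(M,\D_{\fa}),\D_{\fa})$; your identification of the abutment via $\uhom_R(\uhom_R(M,\D_{\fa}),\D_{\fa})\simeq\Lam_{\fa}(M^{\vee\vee})\simeq M\otimes_R\widehat R$ is a legitimate alternative to the paper's first spectral sequence together with $\Hom_R(\D_{\fa},\D_{\fa})\cong\widehat R$ and $\Ext^{>0}_R(\D_{\fa},\D_{\fa})=0$ from Theorem \ref{3}(iii), and indeed it only needs the unconditional adjunction $\Lam_{\fa}((-)^{\vee})\simeq(\ugamma_{\fa}(-))^{\vee}$ plus MGM.

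The genuine gap is in your last step, where you must show each nonzero quotient $K^k=\H^k_{\fa}(C^k)^{\vee}$ is $\fa$-relative Cohen-Macaulay of cohomological dimension $k$. There you invoke $\ugamma_{\fa}(Y^{\vee})\simeq\Lam_{\fa}(Y)^{\vee}$, i.e.\ $\H^j_{\fa}(Y^{\vee})\cong\H_{j}(\Lam_{\fa}(Y))^{\vee}$, for $Y=\H^k_{\fa}(C^k)$. Only the opposite exchange $\H_{j}(\Lam_{\fa}(Y^{\vee}))\cong\H^j_{\fa}(Y)^{\vee}$ holds in general; the one you use fails for non-Artinian, non-finitely generated modules, and $\H^k_{\fa}(C^k)$ is exactly such a module. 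Concretely, let $R$ be a discrete valuation ring with uniformizer $x$, $\fa=(x)$, $Y=\bigoplus_{n\geq1}R/(x^n)$: then $\H_{1}(\Lam_{\fa}(Y))=0$ (since $\Hom_R(R_x,Y)=0$), while $Y^{\vee}\cong\prod_{n}R/(x^n)$ and $\H^1_{\fa}(Y^{\vee})=\coker\bigl(\prod_n R/(x^n)\to(\prod_n R/(x^n))_x\bigr)\neq0$. Since both the vanishing $\H^j_{\fa}(K^k)=0$ for $j<k$ and the nonvanishing $\H^k_{\fa}(K^k)\neq0$ in your argument rest on this invalid identity, the step as written fails. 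The statements themselves are true, but need a different justification: the paper obtains $\H^j_{\fa}(\Ext^{c-i}_R(\Ext^{c-i}_R(M,\D_{\fa}),\D_{\fa}))=0$ for $j<i$ using only the valid adjunction $\Ext^j_R(R/\fa,N^{\vee})\cong\Tor^R_j(R/\fa,N)^{\vee}$ together with the Tor-shift isomorphism $\Tor^R_{t-i}(R/\fa,C^i)\cong\Tor^R_{t}(R/\fa,\H^i_{\fa}(C^i))$ for $\fa$-relative Cohen-Macaulay modules from \cite[Theorem 3.1(iii)]{MZ2} and relative local duality \cite[Corollary 3.2]{MZ2}, the range $j>i$ being the extra hypothesis. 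You would either have to prove your exchange isomorphism for the particular modules occurring here or replace this step by an argument of that kind.
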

R. N. Roberts, in \cite{ROB}, as a dual of the notion of Krull-dimension, introduced the Noetherian dimension, $\Nd M$, for an Artinian $R$-module $M$.
On the other hand, the concept of co-regular sequence for Artinian $R$-module $M$ was introduced by E. Matlis in \cite{EM} as a natural dual of the concept of regular sequence. The width of $M$, denoted by $\wid(M)$, is the length of any maximal $M$-co-regular sequence in $\fm$. In general, for any Artinian $R$-module M $\wid M\leq \Nd M <\infty$. Z. Tang and H. Zakeri, in \cite{TZA}, introduced the concept of co-Cohen-Macaulay for Artinian $R$-module $M$. Indeed, they defined that $M$ is \textit{co-Cohen-Macaulay} if and only if $\wid M=\Nd M$. Also, they proved that $\wid M=\inf\{~i | \Tor^R_{i}(R/\fm,M)\neq 0 \}$. Furthermore, in view of \cite[Corollary 3.4.2]{SC2} one has $\wid M=\inf\{~i |~\H^{\fm}_{i}(M)\neq 0 \}$. Also, in \cite[Propositions 4.8, 4.10]{CN}, it was proved that $\Nd M = \sup\{~i |~\H^{\fm}_i(M)\neq 0 \}$ whenever $M$ is an Artinian module over a local ring $(R,\fm)$. Therefore, an Artinian $R$-module $M$ is co-Cohen-Macaulay if and only if $\H^{\fm}_i(M)=0$ for all $i\neq \Nd M$. Hence, in section 4, as a generalization of this concept our main aim is to study the $R$-modules $X$ such that its local homology modules with respect to an arbitrary ideal $\fa$, $\H_{i}(\Lam_{\fa}(X))$, has just one non-vanishing point. Indeed, we are going to provide some characterizations for relative Cohen-Macaulay and relative sequentially Cohen-Macaulay modules in terms of local homology modules. In this direction, first we provide the two lemmas \ref{00} and \ref{000}, to prove the first main theorem \ref{2} as follows:

Let $\fa$ be a proper ideal of $R$ and $M$ be an $R$-module with $\cd(\fa,M)=c$. Then, we have
 $M$ is finite $\fa$-relative Cohen-Macaulay if and only if \[\H_{i}(\Lam_{\fa}(\H_{\fa}^c(M))\cong\begin{cases}
       \widehat{M}^{\fa} & \text{if $i=c$}\\
       0& \text{otherwise},
       \end{cases} \]
Z. Tang, in \cite{ZT}, proved that over a local ring $(R,\fm)$ if $M$ is a $d$-dimensional Cohen-Macaulay $R$-module, then the $R$-module $\H_{\fm}^d(M)$ is co-Cohen-Macaulay, but the converse is not true, in general. Therefore, the above our result is a generalization of this result of Z. Tang. Also, it shows that the converse of \cite[Theorem 1.2]{SC1} is also true. Next, in Theorem \ref{fff} which is another main result in this section we provide a characterization of a relative sequentially Cohen-Macaulay as follows:
\begin{thm} Let $\fa$ be an ideal of $R$ and $M$ be a finitely generated $R$-module and set \emph{$\T^{i}(-):=\H^{\fa}_i(-)$} or \emph{$\T^{i}(-):=\H_{i}(\Lam_{\fa}(-))$}. Then, consider the following statements:
\begin{itemize}
\item[(i)]{{If $M$ is a finite $\fa$-relative sequentially Cohen-Macaulay, then \emph{$\T^j(\H_{\fa}^i(M))=0$} for all $0\leq i\leq \cd(\fa,M)$ and for all $j\neq i$ and \emph{$\T^i(\H_{\fa}^i(M))$} is zero or $\fa$-relative Cohen-Macaulay of cohomological dimension $i$.}}
\item[(ii)]{{ If \emph{$\H_{j}(\Lam_{\fa}(\H_{\fa}^i(M)))=0$} for all $0\leq i\leq \cd(\fa,M)$ and for all $j\neq i$, then $\widehat{M}^{\fa}$ is a finite $\fa$-relative sequentially Cohen-Macaulay as an $\widehat{R}^{\fa}$-module.}}
\end{itemize}
\end{thm}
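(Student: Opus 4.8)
The plan is to prove the two implications separately: (i) will be reduced to Theorem \ref{2} applied to the successive quotients of an $\fa$-relative sequential filtration of $M$, and (ii) will be extracted from a Greenlees--May spectral sequence.

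For (i), I would fix a filtration $0=M_{0}\subset M_{1}\subset\dots\subset M_{r}=M$ as in Definition \ref{*33}, so that $c_{k}:=\cd(\fa,M_{k}/M_{k-1})$ is strictly increasing with $c_{r}=\cd(\fa,M)$. A routine induction on $k$, feeding the short exact sequences $0\to M_{k-1}\to M_{k}\to M_{k}/M_{k-1}\to0$ into the long exact local cohomology sequence and using $\cd(\fa,M_{k-1})=c_{k-1}<c_{k}$, shows that $\H^{i}_{\fa}(M)=0$ for $i\notin\{c_{1},\dots,c_{r}\}$ and $\H^{c_{k}}_{\fa}(M)\cong\H^{c_{k}}_{\fa}(M_{k}/M_{k-1})$. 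For $i$ not among the $c_{k}$ the module $\H^{i}_{\fa}(M)$ vanishes and there is nothing to do; for $i=c_{k}$ the module $N_{k}:=M_{k}/M_{k-1}$ is finite $\fa$-relative Cohen-Macaulay with $\cd(\fa,N_{k})=c_{k}$, so Theorem \ref{2} gives $\H_{j}(\Lam_{\fa}(\H^{c_{k}}_{\fa}(M)))=\H_{j}(\Lam_{\fa}(\H^{c_{k}}_{\fa}(N_{k})))=0$ for $j\neq c_{k}$ and $\H_{c_{k}}(\Lam_{\fa}(\H^{c_{k}}_{\fa}(M)))\cong\widehat{N_{k}}^{\fa}$. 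Since $\H^{c_{k}}_{\fa}(N_{k})$ is $\fa$-torsion, Lemmas \ref{00} and \ref{000} permit one to replace $\H_{\bullet}(\Lam_{\fa}(-))$ there by $\H^{\fa}_{\bullet}(-)$, so the conclusion holds for both choices of $\T^{\bullet}$; and $\widehat{N_{k}}^{\fa}$ is finite over $\widehat R^{\fa}$ and, by flat base change for local cohomology, $\fa$-relative Cohen-Macaulay of cohomological dimension $c_{k}$ over $\widehat R^{\fa}$.

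For (ii), the engine is the Greenlees--May identity $\Lam_{\fa}\ugamma_{\fa}(M)\simeq\Lam_{\fa}(M)$, together with $\Lam_{\fa}(M)\simeq\widehat M^{\fa}$ (valid since $M$ is finite over the Noetherian ring $R$). Feeding the local cohomology of $M$ into the hyperhomology spectral sequence of $\Lam_{\fa}$ produces a convergent spectral sequence
\[
E^{2}_{p,q}=\H_{p}\bigl(\Lam_{\fa}(\H^{-q}_{\fa}(M))\bigr)\ \Longrightarrow\ \H_{p+q}\bigl(\Lam_{\fa}\ugamma_{\fa}(M)\bigr),
\]
with $p\ge0$ and $-c\le q\le0$, $c=\cd(\fa,M)$, whose abutment $\H_{n}(\Lam_{\fa}\ugamma_{\fa}(M))\cong\H_{n}(\Lam_{\fa}(M))$ equals $\widehat M^{\fa}$ for $n=0$ and vanishes for $n\neq0$. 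The hypothesis forces $E^{2}_{p,q}=0$ unless $p=-q$, so the spectral sequence is concentrated on the antidiagonal $p+q=0$ and degenerates; hence $\widehat M^{\fa}$ inherits a finite filtration $0=F_{-1}\subseteq F_{0}\subseteq\dots\subseteq F_{c}=\widehat M^{\fa}$ by $\widehat R^{\fa}$-submodules with $F_{p}/F_{p-1}\cong\H_{p}(\Lam_{\fa}(\H^{p}_{\fa}(M)))=:G_{p}$. If each $G_{p}$ is zero or a finite $\fa$-relative Cohen-Macaulay $\widehat R^{\fa}$-module of cohomological dimension $p$, then discarding the vanishing steps leaves a filtration of $\widehat M^{\fa}$ with $\fa$-relative Cohen-Macaulay quotients of strictly increasing cohomological dimension, which is exactly the assertion.

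To analyse $G_{p}$, observe that the hypothesis says the complex $\Lam_{\fa}(\H^{p}_{\fa}(M))$ is, in the derived category, the module $G_{p}$ placed in homological degree $p$. Applying $\ugamma_{\fa}$ and using $\ugamma_{\fa}\Lam_{\fa}\simeq\ugamma_{\fa}$ together with $\ugamma_{\fa}(\H^{p}_{\fa}(M))\simeq\H^{p}_{\fa}(M)$ (the latter because $\H^{p}_{\fa}(M)$ is $\fa$-torsion) would give $\ugamma_{\fa}(G_{p})\simeq\H^{p}_{\fa}(M)[-p]$, i.e.\ $\H^{j}_{\fa}(G_{p})=0$ for $j\neq p$ and $\H^{p}_{\fa}(G_{p})\cong\H^{p}_{\fa}(M)$ (the same whether computed over $R$ or over $\widehat R^{\fa}$). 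Hence $G_{p}=0$ precisely when $\H^{p}_{\fa}(M)=0$, and otherwise $G_{p}$ has exactly one nonzero local cohomology module, sitting in degree $p$, so is $\fa$-relative Cohen-Macaulay of cohomological dimension $p$ over $\widehat R^{\fa}$. The step I expect to be the main obstacle is the remaining finiteness: that $G_{p}=\H_{p}(\Lam_{\fa}(\H^{p}_{\fa}(M)))$ is finitely generated over $\widehat R^{\fa}$. This is precisely the kind of statement packaged in Lemmas \ref{00} and \ref{000} --- which already underlie Theorem \ref{2}, where $\H_{c}(\Lam_{\fa}(\H^{c}_{\fa}(M)))$ is identified with the finite module $\widehat M^{\fa}$ --- and I would deduce it from there. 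A routine point handled along the way is the compatibility of the hypotheses and of Definition \ref{*33} with the base change $R\to\widehat R^{\fa}$; this causes no trouble because $\widehat R^{\fa}$ is Noetherian and flat over $R$ and both $\ugamma_{\fa}$ and $\Lam_{\fa}$ commute with it.
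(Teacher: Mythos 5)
Your proposal is correct and follows essentially the same route as the paper: part (i) by induction along the sequential filtration combined with Theorem \ref{2} (this is exactly the paper's Lemma \ref{Lem2}), and part (ii) via the spectral sequence $\H_{p}(\Lam_{\fa}(\H^{q}_{\fa}(M)))\Rightarrow \H_{p-q}(\Lam_{\fa}(M))$, which the paper realizes through the explicit double complex $\Hom_R(\check{L}_{x},\Gamma_{\fa}(E^{\bullet}(M)))$, degenerating under the hypothesis, with the quotients of the induced filtration identified as $\fa$-relative Cohen--Macaulay by the argument of Lemma \ref{000}. The finiteness you flag as the main obstacle is immaterial (after the reduction to $\widehat{R}^{\fa}$ the quotients are subquotients of the Noetherian module $\widehat{M}^{\fa}$), and in (i) the passage between the two functors $\T^{\bullet}$ needs no separate replacement argument since Theorem \ref{2}(ii) and (iii) already cover both.
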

In \cite[Theorem 10.5.9]{SC2}, it is shown that the local ring $(R,\fm)$ is a $d$-dimensional Gorenstein ring if and only if $\H_{i}(\Lam_{\fm}(\E_R(k)))=0$ for all $i\neq d$ and $\H_{d}(\Lam_{\fm}(\E_R(k)))\cong \widehat{R}^{\fm}.$ So, in this direction, we provide the following result which is an improvement of the above result.
\begin{thm}Let $(R,\fm)$ be a local ring of dimension $d$ and let $\fa$ be an ideal of $R$. Then, the following statements are equivalent:
\begin{itemize}
\item[(i)]{{\emph{$R$} is Gorenstein}}
\item[(ii)]{{\emph{$\H_{i}(\Lam_{\fm}(\E_R(k)))=0$} for all $i\neq d$ and \emph{$\H_{d}(\Lam_{\fm}(\E_R(k)))\cong \widehat{R}^{\fm}.$}}}
\item[(iii)]{{There exists an ideal $\fa$ of $R$ with $\cd(\fa, R)=c$ such that \emph{$\H_{i}(\Lam_{\fa}(\E_R(k)))=0$} for all $i\neq c$ and \emph{$\fd_R(\H_{c}(\Lam_{\fa}(\E_R(k))))<\infty.$}}}
\end{itemize}
\end{thm}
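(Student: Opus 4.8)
Since (i)$\Leftrightarrow$(ii) is \cite[Theorem 10.5.9]{SC2}, the plan is to fit (iii) into the picture. The main tool is the Greenlees--May isomorphism $\Lam_{\fb}(X)\simeq\uhom_R(\ugamma_{\fb}(R),X)$ which, combined with exactness of Matlis duality $(-)^{\vee}=\Hom_R(-,\E_R(k))$, gives for every ideal $\fb$ and every $i$
\[\H_i(\Lam_{\fb}(\E_R(k)))\cong\Hom_R(\H^i_{\fb}(R),\E_R(k));\]
this is (the derived content of) Lemmas \ref{00} and \ref{000}. For (ii)$\Rightarrow$(iii) one just takes $\fa=\fm$, so that $c=\cd(\fm,R)=d$: the vanishing in (ii) is exactly what (iii) requires for this $\fa$, and $\H_d(\Lam_{\fm}(\E_R(k)))\cong\widehat R^{\fm}$ is $R$-flat, so its flat dimension is $0<\infty$.

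The real work is (iii)$\Rightarrow$(i). By the displayed identification, the hypothesis $\H_i(\Lam_{\fa}(\E_R(k)))=0$ for $i\neq c$ says $\H^i_{\fa}(R)^{\vee}=0$ for $i\neq c$, and faithfulness of $(-)^{\vee}$ forces $\H^i_{\fa}(R)=0$ for $i\neq c$; together with $c=\cd(\fa,R)$ this means $R$ is $\fa$-relative Cohen--Macaulay. Hence $\ugamma_{\fa}(R)\simeq\H^c_{\fa}(R)[-c]$, so
\[\Lam_{\fa}(\E_R(k))\simeq\uhom_R(\ugamma_{\fa}(R),\E_R(k))\simeq\D_{\fa}[c],\qquad \D_{\fa}=\Hom_R(\H^c_{\fa}(R),\E_R(k)),\]
and the remaining hypothesis reads $\fd_R(\D_{\fa})<\infty$. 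Passing to the $\fm$-adic completion (which changes neither the Gorenstein property nor, by flat base change, the finiteness of the relevant flat dimensions, and under which $\D_{\fa}$ becomes the analogous module over the complete ring, finitely generated by \cite{MZ2}), we may assume $R$ complete.

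Next, apply $\ugamma_{\fa}(-)$ to $\Lam_{\fa}(\E_R(k))\simeq\D_{\fa}[c]$: since $\ugamma_{\fa}\Lam_{\fa}\simeq\ugamma_{\fa}$ and $\ugamma_{\fa}(\E_R(k))\simeq\E_R(k)$ (because $\fa\subseteq\fm$ and $\E_R(k)$ is $\fm$-torsion and injective), we get $\ugamma_{\fa}(\D_{\fa})\simeq\E_R(k)[-c]$; applying $\ugamma_{\fm}(-)$ and using $\ugamma_{\fm}\ugamma_{\fa}\simeq\ugamma_{\fm}$ and $\ugamma_{\fm}(\E_R(k))\simeq\E_R(k)$ gives $\ugamma_{\fm}(\D_{\fa})\simeq\E_R(k)[-c]$, i.e.\ $\H^i_{\fm}(\D_{\fa})=0$ for $i\neq c$ and $\H^c_{\fm}(\D_{\fa})\cong\E_R(k)$. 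As $\E_R(k)$ is a faithful $R$-module, $\Ann_R(\D_{\fa})\subseteq\Ann_R(\H^c_{\fm}(\D_{\fa}))=0$, so the finitely generated module $\D_{\fa}$ is faithful, whence $\dim_R\D_{\fa}=\dim R$; Grothendieck vanishing and non-vanishing then force $\dim R=\dim_R\D_{\fa}=c$ and $\depth_R\D_{\fa}=c$, so $\D_{\fa}$ is maximal Cohen--Macaulay. Since $\fd_R\D_{\fa}=\pd_R\D_{\fa}<\infty$, the Auslander--Buchsbaum formula gives $\pd_R\D_{\fa}=\depth R-\depth_R\D_{\fa}=\dim R-c=0$; hence $R$ is Cohen--Macaulay and $\D_{\fa}$ is free. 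Comparing $\H^d_{\fm}(\D_{\fa})\cong\H^d_{\fm}(R)^{\oplus\rank\D_{\fa}}$ with the indecomposable module $\E_R(k)$ forces $\rank\D_{\fa}=1$, so $\D_{\fa}\cong R$ and $\H^d_{\fm}(R)\cong\E_R(k)$; a Cohen--Macaulay local ring with this last property is Gorenstein.

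The delicate part is the middle passage: obtaining $\Lam_{\fa}(\E_R(k))\simeq\D_{\fa}[c]$ and then, by tracking the interaction of $\ugamma_{\fa}$, $\ugamma_{\fm}$ and $\Lam_{\fa}$, the isomorphism $\H^c_{\fm}(\D_{\fa})\cong\E_R(k)$, all the while keeping control of the finite generation of $\D_{\fa}$ over the complete ring so that the homological invariants behave correctly. Once $\D_{\fa}$ is recognized as a faithful maximal Cohen--Macaulay module of finite projective dimension, the Gorensteinness is a routine Auslander--Buchsbaum computation, and the equivalence (i)$\Leftrightarrow$(ii) closes the loop.
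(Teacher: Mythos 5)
Your opening moves coincide with the paper's: the identification $\H_i(\Lam_{\fa}(\E_R(k)))\cong\Hom_R(\H^i_{\fa}(R),\E_R(k))$, the deduction that $R$ is $\fa$-relative Cohen--Macaulay with $\fd_R(\D_{\fa})<\infty$, and the isomorphisms $\H^i_{\fa}(\D_{\fa})=0$ for $i\neq c$, $\H^c_{\fa}(\D_{\fa})\cong\E_R(k)$ are exactly the data the paper assembles (quoting \cite[Corollary 9.2.5]{SC2} and \cite[Theorem 3.4, Lemma 3.5]{MZ2}). The argument breaks at the claim that $\D_{\fa}$ is finitely generated over the complete ring: \cite{MZ2} does not assert this, and it is false in general. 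Take $R=k[[x,y]]$ and $\fa=(x)$: then $R$ is $\fa$-relative Cohen--Macaulay with $c=1$, and since $(0:_{\H^1_{\fa}(R)}x)\cong R/xR$ one gets $\D_{\fa}/x\D_{\fa}\cong\Hom_R(R/xR,\E_R(k))\cong\E_{k[[y]]}(k)$, which is not finitely generated, so neither is $\D_{\fa}$; yet hypothesis (iii) holds for this $\fa$ because $R$ is regular, so every module has finite flat dimension. Consequently every step of your endgame that needs finite generation --- faithfulness forcing $\dim_R\D_{\fa}=\dim R$, Grothendieck non-vanishing at $\dim_R\D_{\fa}$, the equality $\fd_R\D_{\fa}=\pd_R\D_{\fa}$, the Auslander--Buchsbaum formula, freeness and the rank-one comparison --- is unavailable, and in this example the intermediate conclusions ($\D_{\fa}$ a maximal Cohen--Macaulay finite module, $\D_{\fa}\cong R$) are simply false even though the theorem's conclusion is true. (There is also a minor circularity in writing $\pd_R\D_{\fa}=\depth R-\depth_R\D_{\fa}=\dim R-c$, where $\depth R=\dim R$ is used before Cohen--Macaulayness is known; that slip alone would be repairable, the finite-generation issue is not.)

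The paper's route avoids any structure theory for $\D_{\fa}$: having reached precisely your data, it applies \cite[Theorem 3.2]{MZ1}, a result transferring finiteness of flat dimension through local cohomology of relative Cohen--Macaulay modules, to the module $\D_{\fa}$ (whose top local cohomology is $\E_R(k)$), concluding $\fd_R(\E_R(k))<\infty$; finiteness of the flat dimension of $\E_R(k)$ is a known characterization of Gorenstein rings, which closes (iii)$\Rightarrow$(i). To repair your proof you would need that theorem, or some other mechanism passing the finiteness of $\fd_R(\D_{\fa})$ to $\E_R(k)$, rather than an analysis of $\D_{\fa}$ as if it were a finite module; your treatment of (i)$\Leftrightarrow$(ii) and (ii)$\Rightarrow$(iii) is fine and agrees with the paper.
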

\section{Prerequisites}
An $R$-complex $X$ is a sequence of $R$-modules $(X_v)_{v\in\mathbb{Z}}$ together
with $R$-linear maps $(\partial_v^{X}:X_v \longrightarrow X_{v-1})_{v\in\mathbb{Z}}$,
$$X =\cdots\longrightarrow X_{v+1}\stackrel{\partial_{v+1}^X} \longrightarrow X_{v}\stackrel{\partial_{v}^X}\longrightarrow X_{v-1}\longrightarrow\cdots,$$
such that $\partial_{v}^{X}\partial_{v+1}^{X}=0$ for all $v\in\mathbb{Z}$. The derived category of $R$-modules are denoted by $\mathrm{D}(R)$ and we use the symbol $\simeq$ for denoting isomorphisms in $\mathrm{D}(R)$. Any $R$-module
$M$ can be considered as a complex having $M$ in its $0$-th spot and $0$ in its other spots. We denote the
full subcategory of homologically left (resp. right) bounded complexes by $\mathrm{D}_{\sqsubset}(R)$ (resp.
$\mathrm{D}_{\sqsupset}(R)$). Let ${X}\in \mathrm{D}(R)$ and/or ${Y}\in \mathrm{D}(R)$.
The left-derived tensor product complex of ${X}$ and ${Y}$ in $\mathrm{D}(R)$ is
denoted by ${X}\otimes_R^{{\bf L}}{Y}$ and is defined by
{$${X}\otimes_R^{{\bf L}}{Y}\simeq {F}\otimes_R{Y}\simeq{X}
\otimes_R{F}^{'}\simeq {F}\otimes_R{F},^{'}$$} where ${F}$ and
${F}^{'}$ are flat resolutions of ${X}$ and ${Y}$, respectively.
Also, let ${X}\in \mathrm{D}(R)$ and/or ${Y}\in \mathrm{D}(R)$. The right derived homomorphism complex of ${X}$ and ${Y}$ in
$\mathrm{D}(R)$ is denoted by ${\bf R}\Hom_R({X},{Y})$ and is defined by $${\bf R}\Hom_R({X},{Y})\simeq \Hom_R({P},{Y})\simeq
\Hom_R({X},{I})\simeq \Hom_R({P},I),$$ where ${P}$ and
${I}$ are the projective resolution of ${X}$ and injective resolution of ${Y}$,
respectively. For any two complexes $X$ and $Y$, we set $\Ext_R^i(X,Y)=\H_{-i}(\uhom_R(X,Y))$ and $\Tor_{i}^R(X,Y)=\H_{i}(X\utp_{R}Y)$. For any integer $n$, the $n$-fold shift of a complex $(X,\xi^X)$ is the complex $\Sigma^nX$ given by $(\Sigma^n X)_v=X_{v-n}$ and $\xi_{v}^{\Sigma^nX}=(-1)^n\xi_{v-n}^{X}$. Also, we have $\H_i(\Sigma^nX)=\H_{i-n}(X)$. Next, for any contravariant, additive and exact functor $\T: \mathrm{D}(R)\longrightarrow \mathrm{D}(R)$ and $X\in \mathrm{D}(R)$, we have $\H_l(\T(X))\cong\T(\H_{-l}(X))$ for all $l\in\mathbb{Z}$. For any
$R$-module $M$ and ideal $\fa$ of $R$, set $\Gamma_{\fa}(M):=\{x\in M|~\Supp_RRx\subseteq \V(\fa)\}.$ Now, for any $R$-complex $X$ in $\mathrm{D}_{\sqsubset}(R)$, the right derived functor of the functor $\Gamma_{\fa}(-)$ in $\mathrm{D}(R)$, $\ugamma_{
\fa}(X)$, exists and is defined by ${\bf R} \Gamma_{\fa}(X):=\Gamma_{\fa}(I)$, where $I$
is any injective resolution of $X$.  Also, for any integer $i$, the $i$-th local cohomology module of $X$ with
respect to $\fa$ is defined by $\H_{\fa}^i(X):=\H_{-i}({\bf R}\Gamma_{\fa}(X))$.

Let $\fa$ be an ideal of $R$ and $M$ an $R$-module. The i-th local homology module $\H_i^{\fa}(M)$ of $M$ with respect to $\fa$ is defined by
$$\H_i^{\fa}(M)=\underset{n\in\mathbb{N}}\varprojlim\Tor^R_{i}(R/\fa^n, M)=\underset{n\in\mathbb{N}}\varprojlim\H_{i}(R/\fa^n\otimes_R F_{\bullet}),$$
where $F_{\bullet}$ is a projective resolution of $M$. On the other hand, we use $\Lambda_{\fa}(M):=\underset{n\in\mathbb{N}}\varprojlim(R/\fa^n\otimes_{R}M)$
to denote the $\fa$-adic completion of $M$. It is known that the functor of the $\fa$-adic completion, $\Lambda_{\fa}(-)$, is an additive covariant
functor from the category of $R$-modules and $R$-homomorphisms to itself. We denote by $\H_{i}(\Lam_{\fa}(M))$ the i-th left derived module of $\Lambda_{\fa}(M)$. Since the tensor functor is not left exact and the inverse limit is not right exact on the category of $R$-modules, the functor $\Lambda_{\fa}(-)$ is neither left nor right exact. Therefore $\H_{0}(\Lam_{\fa}(M))\neq \Lambda_{\fa}(M)$,  in general. Notice that $\H_{i}(\Lam_{\fa}(M))=\H_i(\underset{n\in\mathbb{N}}\varprojlim(R/\fa^n\otimes_R F_{\bullet}))$. Thus, there are natural maps $\phi_{i}: \H_{i}(\Lam_{\fa}(M))\longrightarrow\H_i^{\fa}(M),$ which are epimorphisms by \cite[(1.1)]{GMMM}. Also, in view of \cite[Proposition 4.1]{CN}, these maps are isomorphisms provided $M$ is Artinian. Clearly, $\H_0^{\fa}(M)\cong\Lambda_{\fa}(M)$. Moreover, if $M$ is a finitely generated module, then by \cite[Remark 2·1(iii)]{CN} for all $i > 0$,  $\H_{i}(\Lam_{\fa}(M))=0$ and so $\H_i^{\fa}(M)=0$. Let $\underline{x} = x_1,\dots ,x_t$ denote a system of elements in $R$ and $\fa =(\underline{x})R$. Then, we recall the definition of the $\check{C}$ech, $\check{C}_{\underline{x}}$, and its free resolution $\check{L}_{\underline{x}}$ as done in \cite[ 6.2.2 and 6.2.3]{SC2}. For any $R$-module $M$, one has $\H_i(\Lam_{\fa}(M))=\H_i(\Hom_R(\check{L}_{\underline{x}},M))$ (see \cite{SC2} for more details and generalizations).

\begin{defn} \emph{We say that an $R$-module $M$ is \textit{relative Cohen-Macaulay} with respect to $\fa$ if there is precisely one non-vanishing local cohomology module of $M$ with respect to $\fa$. In the case where $M$ is finitely generated, clearly this is the case if and only if $\gr(\fa,M)=\cd(\fa,M)$, where $\cd(\fa,M)$ is the largest integer $i$ for which $\H_{\fa}^i(M)\neq0$. For convenience, we use the notation $\fa$-relative Cohen-Macaulay, for an $R$-module which is relative Cohen-Macaulay with respect to $\fa$. Also, in the case where $(R, \fm)$ is a relative Cohen-Macaulay local ring with respect to $\fa$ and $c=\h_{R}\fa$, we define the $R$-module $\D_{\fa}:=\Hom_R(\H_{\fa}^c(R),\E(R/\fm))$ as a relative canonical $R$-module.}
\end{defn}
Observe that the above definition provides a generalization of the concept of Cohen-Macaulay modules. Also, notice that the notion of relative Cohen-Macaulay modules is connected with the notion of cohomologically complete intersection ideals which has been studied in \cite{HSC1} and has led to some interesting results. Furthermore, recently, such modules have been studied in \cite{HSC}, \cite{MZ} and \cite{MZ1}. In \cite{MZ2}, we showed the $R$-module  $\D_{\fa}$ treat like canonical modules over Cohen-Macaulay local rings. Indeed, we provided the following result which will be used in section 3 of the present paper.

\begin{thm}\label{3}Let $(R,\fm)$ be an $\fa$-relative Cohen-Macaulay local ring with \emph{$\h_{R}\fa=c$}. Then, the following statements hold:
  \begin{itemize}
\item[(i)]{\emph{For all ideals $\fb$ of $R$ such that $\fa\subseteq\fb$, $\H_{\fb}^i(\D_{\fa})=0$ if and only if $i\neq c$. }}
\item[(ii)]{\emph{$\id_{R}(\D_{\fa})=c.$}}
\item[(iii)]{\emph{$\widehat{R}\cong\Hom_{R}(\D_{\fa},\D_{\fa})$ and $\Ext_{R}^i(\D_{\fa},\D_{\fa})=0$ for all $i>0$.}}
\item[(iv)]{\emph{For all $t= 0, 1,\ldots,c $ and for all finitely generated $\fa$-relative Cohen-Macaulay $R$-modules $M$ with $\cd(\fa,M)=t$, one has:
\begin{itemize}
\item[(a)]{{$\Ext_{R}^{c-i}(M,\D_{\fa})=0$ if and only if $i\neq t$.}}
\item[(b)]{{$\H_{\fa}^i(\Ext_{R}^{c-t}(M,\D_{\fa}))=0$ if and only if $i\neq t$}}
\end{itemize}}}
\end{itemize}
\end{thm}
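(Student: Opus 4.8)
The plan is to distill the relative Cohen--Macaulay hypothesis into a single derived-category isomorphism and then read off all four items from it together with the Matlis--Greenlees--May formalism recorded in \cite{SC2}. Throughout I write $(-)^{\vee}=\Hom_R(-,\E(R/\fm))$ for Matlis duality, an exact contravariant functor. Since $R$ is $\fa$-relative Cohen--Macaulay with $\cd(\fa,R)=c$, the complex $\ugamma_{\fa}(R)$ has homology concentrated in cohomological degree $c$, so $\ugamma_{\fa}(R)\simeq\Sigma^{-c}\H_{\fa}^{c}(R)$ in $\mathrm{D}(R)$. For any $M\in\mathrm{D}_{\sqsubset}(R)$ this gives $\ugamma_{\fa}(M)\simeq\ugamma_{\fa}(R)\utp_R M\simeq\Sigma^{-c}(\H_{\fa}^{c}(R)\utp_R M)$, and hence the two workhorse formulas
\[
\H_{\fa}^{j}(M)\cong\Tor^{R}_{c-j}(\H_{\fa}^{c}(R),M),\qquad (\H_{\fa}^{i}(M))^{\vee}\cong\Ext_R^{c-i}(M,\D_{\fa}),
\]
the second obtained from the first by applying $(-)^{\vee}$ and the adjunction $(X\utp_R Y)^{\vee}\simeq\uhom_R(X,Y^{\vee})$ together with $(\H_{\fa}^{c}(R))^{\vee}=\D_{\fa}$. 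These play the role of local duality, and they use the hypothesis essentially.

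Granting this, parts (iv)(a) and (ii) are immediate. For (iv)(a), the second formula gives $\Ext_R^{c-i}(M,\D_{\fa})\cong(\H_{\fa}^{i}(M))^{\vee}$; since $M$ is $\fa$-relative Cohen--Macaulay with $\cd(\fa,M)=t$ the module $\H_{\fa}^{i}(M)$ is nonzero exactly for $i=t$, and Matlis duality preserves this, so $\Ext_R^{c-i}(M,\D_{\fa})=0$ iff $i\neq t$. For (ii) I would first reduce, via $\Ext_R^{i}(-,N^{\vee})\cong\Tor^{R}_{i}(-,N)^{\vee}$, to $\id_R(\D_{\fa})=\fd_R(\H_{\fa}^{c}(R))$, and then use the first formula to compute $\Tor^{R}_{i}(\H_{\fa}^{c}(R),R/\fm)\cong\H_{\fa}^{c-i}(R/\fm)$; as $R/\fm$ is $\fa$-torsion this equals $R/\fm$ for $i=c$ and $0$ otherwise, whence $\fd_R(\H_{\fa}^{c}(R))=c$.

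The two genuinely homological items are (i) and (iv)(b), and both rest on the Matlis swap $\ugamma_{\fb}(Y^{\vee})\simeq(\Lam_{\fb}(Y))^{\vee}$ together with $\Lam_{\fb}\ugamma_{\fb}\simeq\Lam_{\fb}$ and $\ugamma_{\fb}\ugamma_{\fa}\simeq\ugamma_{\fb}$ (the latter since $\fa\subseteq\fb$). For (i), writing $\D_{\fa}=(\H_{\fa}^{c}(R))^{\vee}$ and $\H_{\fa}^{c}(R)\simeq\Sigma^{c}\ugamma_{\fa}(R)$, the identity $\Lam_{\fb}\ugamma_{\fa}(R)\simeq\Lam_{\fb}(R)\simeq\widehat{R}^{\fb}$ collapses the completion complex:
\[
\ugamma_{\fb}(\D_{\fa})\simeq(\Lam_{\fb}(\H_{\fa}^{c}(R)))^{\vee}\simeq(\Sigma^{c}\Lam_{\fb}(R))^{\vee}\simeq\Sigma^{-c}(\widehat{R}^{\fb})^{\vee},
\]
so $\H_{\fb}^{i}(\D_{\fa})\cong(\widehat{R}^{\fb})^{\vee}$ for $i=c$ and $0$ otherwise, which is (i). For (iv)(b) I put $N=\Ext_R^{c-t}(M,\D_{\fa})\cong(\H_{\fa}^{t}(M))^{\vee}$; the same swap gives $\H_{\fa}^{i}(N)\cong(\H_{i}(\Lam_{\fa}(\H_{\fa}^{t}(M))))^{\vee}$, and since $\H_{\fa}^{t}(M)\simeq\Sigma^{t}\ugamma_{\fa}(M)$ and $\Lam_{\fa}\ugamma_{\fa}(M)\simeq\Lam_{\fa}(M)\simeq\widehat{M}^{\fa}$ the right side is $(\widehat{M}^{\fa})^{\vee}$ concentrated at $i=t$.

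Finally (iii). From $\D_{\fa}\simeq\Sigma^{-c}(\ugamma_{\fa}(R))^{\vee}$ the shifts cancel, so $\uhom_R(\D_{\fa},\D_{\fa})\simeq\uhom_R((\ugamma_{\fa}(R))^{\vee},(\ugamma_{\fa}(R))^{\vee})$, and the adjunction $\uhom_R(A^{\vee},A^{\vee})\simeq\uhom_R(A,A^{\vee\vee})$ reduces this to $\uhom_R(\ugamma_{\fa}(R),(\ugamma_{\fa}(R))^{\vee\vee})$. Computing $(\ugamma_{\fa}(R))^{\vee\vee}\simeq\ugamma_{\fa}(\E(R/\fm)^{\vee})=\ugamma_{\fa}(\widehat{R})$ by the Matlis swap (using $\E(R/\fm)^{\vee}=\widehat{R}$) and then applying the adjunction $\uhom_R(\ugamma_{\fa}X,Y)\simeq\uhom_R(X,\Lam_{\fa}Y)$, I obtain
\[
\uhom_R(\D_{\fa},\D_{\fa})\simeq\Lam_{\fa}\ugamma_{\fa}(\widehat{R})\simeq\Lam_{\fa}(\widehat{R})\simeq\widehat{R},
\]
the last step because the complete local ring $\widehat{R}$ is already $\fa$-adically complete; taking $\H_{0}$ and $\H_{-i}$ then yields both claims of (iii). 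The main obstacle is the bookkeeping in this last computation: I must justify the Matlis-swap and adjunction identities in the present non-finitely-generated setting, track the shifts so that $\uhom_R(\D_{\fa},\D_{\fa})$ really is concentrated in degree $0$, and confirm $\Lam_{\fa}(\widehat{R})\simeq\widehat{R}$, i.e. that $\fm$-adic completeness forces $\fa$-adic completeness. Everywhere the hypothesis enters through $\ugamma_{\fa}(R)\simeq\Sigma^{-c}\H_{\fa}^{c}(R)$, which is exactly what makes each torsion/completion complex above collapse onto a single module.
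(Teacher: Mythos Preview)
The paper does not actually prove this theorem: it is quoted verbatim in the Prerequisites section as a result established in \cite{MZ2}, with no argument supplied here. So there is no ``paper's own proof'' to compare against.

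That said, your approach is sound and self-contained. The single derived-category identification $\ugamma_{\fa}(R)\simeq\Sigma^{-c}\H_{\fa}^{c}(R)$ really does drive everything, and your two workhorse formulas are the correct relative local-duality statements. Parts (ii) and (iv)(a) follow immediately as you say; for (i) and (iv)(b) the Greenlees--May identities $\ugamma_{\fb}(Y^{\vee})\simeq(\Lam_{\fb}Y)^{\vee}$, $\Lam_{\fb}\ugamma_{\fb}\simeq\Lam_{\fb}$, and $\ugamma_{\fb}\ugamma_{\fa}\simeq\ugamma_{\fb}$ for $\fa\subseteq\fb$ are all available over a Noetherian ring in \cite{SC2}, so the computations go through. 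The only place requiring real care is (iii), and you have correctly identified the pressure points: the swap $(\ugamma_{\fa}R)^{\vee\vee}\simeq\ugamma_{\fa}(\widehat{R})$ and the adjunction $\uhom_R(\ugamma_{\fa}X,Y)\simeq\uhom_R(X,\Lam_{\fa}Y)$ both hold in this setting, and the final step $\Lam_{\fa}(\widehat{R})\simeq\widehat{R}$ is legitimate because an $\fm$-adically complete Noetherian local ring is complete with respect to every ideal, while $\H_i(\Lam_{\fa}(\widehat{R}))=0$ for $i>0$ since $\widehat{R}$ is finitely generated over itself. Your shift bookkeeping is also correct throughout. In short, your proposal is a clean proof; it is simply not something the present paper undertakes.
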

\begin{defn}\label{*33}\emph{Let $M$ be an $R$-module and $\fa$ be an ideal of $R$. Then we say that $M$ is \textit{sequentially Cohen-Macaulay }with respect to $\fa$ (Abb. $\fa$-sequentially Cohen-Macaulay) if there exists a finite filtration
$$0=M_0 \subset M_1 \subset\dots \subset M_{r-1}\subset M_{r}=M$$
of $M$ by submodules of $M$ such that each quotient module $M_{i}/M_{i-1}$ is an $\fa$-relative Cohen-Macaulay $R$-module and that $\cd(\fa,M_1 /M_0)<\cd(\fa,M_2 /M_1)<\dots<\cd(\fa,M_{r-1} /M_{r-2})<\cd(\fa,M/M_{r-1}).$ In the case where, $M$ is finitely generated a sequentially Cohen-Macaulay with respect to $\fa$ is called \textit{finite $\fa$-sequentially Cohen-Macaulay}.}
\end{defn}
Here notice that the concept of relative sequentially Cohen-Macaulay is a generalization of the notion of sequentially Cohen-Macaulay which is defined in the introduction. Indeed, for a finitely generated $R$-module $M$ over a local ring $(R,\fm)$, one has $\cd(\fm, M)=\dim_R(M)$ and also we can see that the concept of $\fm$-relative Cohen-Macaulay modules coincide with the ordinary concept of Cohen-Macaulay modules, and so $\fm$-sequentially Cohen-Macaulay modules are precisely the sequentially Cohen-Macaulay modules. Also, it is clear that every $\fa$-relative Cohen-Macaulay $R$-modules are also $\fa$-relative sequentially Cohen-Macaulay, but in general, the converse is no longer true. For this purpose, let $R=k[[x,y]]$ where $k$ is a field, $M=k[[x,y]]/(xy)$ and $\fa=(x)$. Notice that $\cd(\fa,M)=1$ and $\Gamma_{\fa}(M)\neq 0$; and so $M$ is not $\fa$-relative Cohen-Macaulay. But,  $0\subset\Gamma_{\fa}(M)\subset M$ is a finite filtration such that $\Gamma_{\fa}(M)$ and $M/\Gamma_{\fa}(M)$ are $\fa$-relative Cohen-Macaulay $R$-modules with cohomological dimension $0$ and $1$, respectively. Therefore $M$ is a finite $\fa$-relative sequentially Cohen-Macaulay $R$-module which is not finite $\fa$-relative Cohen-Macaulay.

\section{Relative Sequentially Cohen-Macaulay and relative Canonical modules}

The starting point of this section is the next two lemmas, which will play an essential role in proving our main result \ref{1}.
\begin{lem}\label{Lem}Let $(R,\fm)$ be an $\fa$-relative Cohen-Macaulay local ring with $\cd(\fa,R)=c$ and suppose that $M$ is a finite $\fa$-relative sequentially Cohen-Macaulay module with the filtration $0=M_0 \subset M_1 \subset\dots \subset M_{r-1}\subset M_{r}=M$ and set $\cd(\fa,M_i /M_{i-1}):=c_i$. Then $\Ext^{c-j}_{R}(M,\D_{\fa})=0$ for all $j\notin\{c_1,\dots, c_r\}$ and $\Ext^{c-c_i}_{R}(M,\D_{\fa})$ is $\fa$-relative Cohen-Macaulay of cohomological dimension $c_i$ for all $i=1,\dots, r.$
\begin{proof}
We use induction on $r$ to prove the result. To do this, let  $r=1$, then by our assumption $M$ is finite $\fa$-relative Cohen-Macaulay with $\cd(\fa,M)=c_1$. So, by Theorem \ref{3}, one has $\Ext_R^{c-i}(M,\D_{\fa})=0$ for all $i\neq c_1$ and $\Ext_R^{c-c_1}(M,\D_{\fa})$ is an $\fa$-relative Cohen-Macaulay module with $\cd(\fa,\Ext_R^{c-c_1}(M,\D_{\fa}))=c_1$. Now, assume that $r>1$ and the result has been proved for all finite $\fa$-relative sequentially Cohen-Macaulay modules with a finite filtration of length $r-1$. Notice that $M/M_1$ is a finite $\fa$-relative sequentially Cohen-Macaulay $R$-module with the following finite filtration of length $r-1$:
$$0=M_1/M_1 \subset M_2/M_1 \subset\dots \subset M_{r}/M_1=M/M_1.$$ Therefore, by induction hypothesis one has $\Ext_R^{c-j}(M/M_1,\D_{\fa})=0$ for all $j\notin \{c_2 ,\dots, c_{r}\}$ and $\Ext^{c-c_i}_{R}(M/M_1,\D_{\fa})$ is $\fa$-relative Cohen-Macaulay of cohomological dimension $c_i$ for all $i=2,\dots, r.$
Now consider the short exact sequence $0 \longrightarrow M_1 \longrightarrow M \longrightarrow M/M_1 \longrightarrow0$ and the induced long exact sequence
\[\begin{array}{rl}
\cdots\longrightarrow\Ext_R^{c-i}(M/M_1,\D_{\fa})\longrightarrow\Ext_R^{c-i}(M,\D_{\fa})\longrightarrow \Ext_R^{c-i}(M_1,\D_{\fa})\\ \longrightarrow\Ext_R^{c-i+1}(M/M_1,\D_{\fa})\longrightarrow\Ext_R^{c-i+1}(M,\D_{\fa})\longrightarrow\cdots.
\end{array}\]
Notice that, by Theorem \ref{3}, $\Ext_R^{c-j}(M_1,\D_{\fa})=0$ for all $j\neq c_1$ and $\Ext_R^{c-c_1}(M_1,\D_{\fa})$ is $\fa$-relative Cohen-Macaulay with $\cd(\fa,\Ext_R^{c-c_1}(M_1,\D_{\fa}))=c_1.$ Therefore, by the above induced sequence one has the isomorphisms $\Ext_R^{c-j}(M,\D_{\fa})\cong\Ext_R^{c-j}(M/M_1,\D_{\fa})$ for $j\neq c_1 , c_{1}-1$ and the following exact sequence:
\[\begin{array}{rl}
0\longrightarrow\Ext_R^{c-c_1}(M/M_1,\D_{\fa})\longrightarrow\Ext_R^{c-c_1}(M,\D_{\fa})\longrightarrow\Ext_R^{c-c_1}(M_1,\D_{\fa})\\ \longrightarrow\Ext_R^{c-c_1+1}(M/M_1,\D_{\fa})\longrightarrow\Ext_R^{c-c_1+1}(M,\D_{\fa})\longrightarrow 0.
\end{array}\]
Since $\Ext_R^{c-j}(M/M_1,\D_{\fa})=0$ for $j=c_1, c_1-1$, one has  $\Ext_R^{c-c_1}(M,\D_{\fa})\cong\Ext_R^{c-c_1}(M_1,\D_{\fa})$ and $\Ext_R^{c-c_1+1}(M,\D_{\fa})=0$, as required.
\end{proof}
\end{lem}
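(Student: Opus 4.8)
The plan is to argue by induction on the length $r$ of the filtration $0=M_0\subset M_1\subset\cdots\subset M_r=M$, exactly as the statement's hypotheses are organized. The base case $r=1$ is immediate: then $M$ itself is finite $\fa$-relative Cohen-Macaulay with $\cd(\fa,M)=c_1$, and Theorem \ref{3}(iv) gives at once that $\Ext^{c-i}_R(M,\D_\fa)=0$ for $i\neq c_1$ and that $\Ext^{c-c_1}_R(M,\D_\fa)$ is $\fa$-relative Cohen-Macaulay of cohomological dimension $c_1$. For the inductive step, the key observation is that $M/M_1$ inherits a filtration $0=M_1/M_1\subset M_2/M_1\subset\cdots\subset M_r/M_1=M/M_1$ of length $r-1$ whose successive quotients are $(M_i/M_1)/(M_{i-1}/M_1)\cong M_i/M_{i-1}$, so $M/M_1$ is finite $\fa$-relative sequentially Cohen-Macaulay with the same invariants $c_2<\cdots<c_r$; the induction hypothesis then describes $\Ext^{c-j}_R(M/M_1,\D_\fa)$ completely.

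The heart of the argument is then a diagram chase on the long exact sequence of $\Ext^{\bullet}_R(-,\D_\fa)$ attached to $0\to M_1\to M\to M/M_1\to 0$. Since $M_1$ is $\fa$-relative Cohen-Macaulay of cohomological dimension $c_1$, Theorem \ref{3}(iv) kills $\Ext^{c-j}_R(M_1,\D_\fa)$ for all $j\neq c_1$; combined with the vanishing of $\Ext^{c-j}_R(M/M_1,\D_\fa)$ for $j\notin\{c_2,\dots,c_r\}$, the long exact sequence yields isomorphisms $\Ext^{c-j}_R(M,\D_\fa)\cong\Ext^{c-j}_R(M/M_1,\D_\fa)$ for every $j\notin\{c_1,c_1-1\}$, and in particular $\Ext^{c-j}_R(M,\D_\fa)=0$ for $j\notin\{c_1,\dots,c_r\}$ while $\Ext^{c-c_i}_R(M,\D_\fa)\cong\Ext^{c-c_i}_R(M/M_1,\D_\fa)$ is $\fa$-relative Cohen-Macaulay of cohomological dimension $c_i$ for $i=2,\dots,r$. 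Here one uses that $c_1<c_2<\cdots<c_r$, so that $c_1$ and $c_1-1$ are both strictly below the range $\{c_2,\dots,c_r\}$, and that $c_1-1\notin\{c_2,\dots,c_r\}$ as well — this strict monotonicity is exactly what makes the indices line up.

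It remains to handle the two borderline degrees $j=c_1$ and $j=c_1-1$. Using $\Ext^{c-c_1}_R(M/M_1,\D_\fa)=0$ and $\Ext^{c-(c_1-1)}_R(M/M_1,\D_\fa)=0$ (both hold since $c_1,c_1-1\notin\{c_2,\dots,c_r\}$), the relevant piece of the long exact sequence collapses to $0\to\Ext^{c-c_1}_R(M,\D_\fa)\to\Ext^{c-c_1}_R(M_1,\D_\fa)\to 0$ together with $\Ext^{c-c_1+1}_R(M,\D_\fa)=0$. Hence $\Ext^{c-c_1}_R(M,\D_\fa)\cong\Ext^{c-c_1}_R(M_1,\D_\fa)$, which by Theorem \ref{3}(iv) is $\fa$-relative Cohen-Macaulay of cohomological dimension $c_1$, completing the induction. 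I expect the only real subtlety to be bookkeeping: one must write out enough of the long exact sequence around degrees $c-c_1$ and $c-c_1+1$ to see precisely which maps are forced to be zero, and invoke the strict inequalities among the $c_i$ at each juncture; there is no deeper obstacle, since every vanishing statement about the Cohen-Macaulay pieces is supplied directly by Theorem \ref{3}.
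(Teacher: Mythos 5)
Your proposal is correct and follows essentially the same route as the paper's own proof: induction on the filtration length $r$, the base case via Theorem \ref{3}, and the inductive step via the long exact sequence of $\Ext_R^{\bullet}(-,\D_{\fa})$ applied to $0\to M_1\to M\to M/M_1\to 0$, using the strict inequalities $c_1<c_2<\cdots<c_r$ to isolate the degrees $c_1$ and $c_1-1$ and conclude $\Ext_R^{c-c_1}(M,\D_{\fa})\cong\Ext_R^{c-c_1}(M_1,\D_{\fa})$. No gaps; the bookkeeping you flag is exactly what the paper carries out.
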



\begin{lem}\label{22}Let $\fa$ be an ideal of $R$ with $\cd(\fa,R)=c$, $M$ be a finitely generated $R$-module and $D$ be an $R$-module which the following conditions are established:
\begin{itemize}
\item[(i)]{{$\Hom_R(D,D)$ is flat and $\Ext_R^i(D,D)=0$ for all $i>0$.}}
\item[(ii)]{{For all $i$, we have $\Ext_{R}^{c-i}(M,D)$ is zero, or}}
\begin{itemize}
\item[(a)]{{$\Ext_{R}^{c-j}(\Ext_{R}^{c-i}(M,D),D)=0$ if and only if $j\neq i$, and}}
\item[(b)]\emph{{{$\H_{\fa}^j(\Ext_{R}^{c-i}(\Ext_{R}^{c-i}(M,D),D))=0$ if and only if $j\neq i$.}}}
\end{itemize}
\end{itemize}
 Then $M\otimes_R\Hom_{R}(D,D)$ is $\fa$-sequentially Cohen-Macaulay.
\begin{proof}Let $P_\bullet:\cdots\longrightarrow P_1\longrightarrow P_0\longrightarrow 0$ be a deleted free resolution with finitely generated terms for $M$ and $I_\bullet:0\longrightarrow I^0\longrightarrow I^1\longrightarrow \cdots\longrightarrow I^c\longrightarrow\cdots$ be a deleted injective resolution for $D$. Now, consider the first quadrant double complex $M_{p,q}:=\Hom_R(\Hom_R(P_p,D),I^{c-q})$. Let $^IE$ (resp. $^{II}E$) denote the vertical (resp. horizontal) spectral sequence
associated to the double complex $\mathcal{M}=\{M_{p,q}\}$. Now, with the notation of \cite{JR}, $\E^1$ is the bigraded module whose $(p,q)$ term is $\H^{''}_{q}(M_{p,*})$, the q-th homology of the p-th column. Therefore, $\H^{''}_{q}(M_{p,*})=\Ext_R^{c-q}(\Hom_R(P_p,D),D)$, and so in view of $(i)$ one has
\[ ^{I}E_{p,q}^{1}=\H^{''}_{q}(M_{p,*})=\begin{cases}
       0 & \text{if $q\neq c$}\\
       \Hom_R(\Hom_R(P_p,D),D) & \text{if $q=c$},
       \end{cases} \]
As  $P_p$ is finite free module for all $p$, then in view of \cite[Theorem 2.5.6]{CF1}, one has $\Hom_R(\Hom_R(P_\bullet,D),D)\simeq P_\bullet\otimes_R\Hom_R(D,D)$.
So, we have
       \[ ^{I}E_{p,q}^{2}=\H^{'}_{p}\H^{''}_{q}(\mathcal{M})=\begin{cases}
       0 & \text{if $q\neq c$}\\
       \Tor^{R}_{p}(M, \Hom_R(D,D)) & \text{if $q=c$}.
       \end{cases} \]
       Hence, in view of $(i)$, $^{I}E_{p,q}^{2}=0$ for all $p>0$ and so the spectral sequence collapses on the $q$-axis. Therefore, by \cite[Proposition 10.21]{JR}, one has $^{I}E_{0,c}^{2}=M\otimes_R\Hom_R(D,D)\cong \H_c(\Tot(M))$.

       A similar argument applies to the second iterated homology, using the fact that each $I_{c-q}$ is injective, yields
       $^{II}E_{p,q}^{2}=\H^{''}_{p}\H^{'}_{q}(\mathcal{M})=\Ext_R^{c-p}(\Ext_R^q(M,D),D)$ and that $^{II}E_{p,q}^{2}\underset{p}\Longrightarrow\H_{p+q}(\Tot(\mathcal{M}))$. Thus, for all $p,q$ such that $c=p+q$, there is the following filtration  $${0}=\Phi^{-1}H_{c}\subseteq\Phi^{0}H_{c}\subseteq\ldots\subseteq\Phi^{c-1}H_{c}\subseteq\Phi^{c}H_{c}=H_{c}(\Tot(M))=M\otimes_R\Hom_R(D,D)$$
       such that $E_{p,c-p}^{\infty}:=^{II}E_{p,c-p}^{\infty}\cong\Phi^{p}H_{c}/\Phi^{p-1}H_{c}$.

       On the other hand, by our assumption $(ii)$, one has $\Ext_R^{c-p}(\Ext_R^q(M,D),D)=0$ for all $p\neq c-q$ and so
       $\E^{2}_{p,q}=0$ for all $p\neq c-q$. Next, for each $r\geq2$ and $p\geq 0$, let $$Z^{r}_{p,c-p}:=\Ker(\E^{r}_{p,c-p}\longrightarrow\E^{r}_{p-r,c-p+r-1})$$ and $$B^{r}_{p,c-p}:=
\Im(\E^{r}_{p+r,c-p-r+1}\longrightarrow\E^{r}_{p,c-p}).$$ By using the assumption, $\E^{r}_{p-r,c-p+r-1}=\E^{r}_{p+r,c-p-r+1}=0$ for all $r\geq2$ and $p\geq 0$, because for all $p$, $\E^{r}_{p,c-p}$ is subquotients of $\E^{2}_{p,c-p}$, and so $\E^{r}_{p,c-p}=Z^{r}_{p,c-p}$ and $B^{r}_{p,c-p}=0$. On the other hand, one has $\E^{r+1}_{p,c-p}=\frac{Z^{r}_{p,c-p}}{B^{r}_{p,c-p}}$, and thus $\E^2_{p,c-p}\cong \E^{3}_{p,c-p}\cong\dots\cong\E_{p,c-p}^{\infty}$ for all $p\geq 0$. Therefore, by
       our assumption $(iii)$ for all $p$, $\Ext^{c-p}_{R}(\Ext^{c-p}_{R}(M,D),D)$ is $\fa$-relative Cohen-Macaulay of cohomological dimension $p$. Therefore, $\Phi^{p}H_{c}/\Phi^{p-1}H_{c}$ is $\fa$-relative Cohen-Macaulay of cohomological dimension $p$ for all $0\leq p\leq c$, and so the above filtration is a $\fa$-relative Cohen-Macaulay filtration for $M\otimes_R\Hom_R(D,D)$, as required.

\end{proof}
\end{lem}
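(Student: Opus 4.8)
The plan is to realise $M\otimes_R\Hom_R(D,D)$ as the degree-$c$ homology of a suitable double complex and then extract an $\fa$-relative Cohen-Macaulay filtration of it from one of the two iterated-homology spectral sequences, the other spectral sequence being used only to identify the abutment. First I would fix a resolution $P_\bullet\to M$ by finitely generated free $R$-modules and an injective resolution $D\to I^\bullet$, and form the double complex $\mathcal{M}=\{M_{p,q}\}$ with $M_{p,q}=\Hom_R(\Hom_R(P_p,D),I^{c-q})$, where $p\geq 0$ and $0\leq q\leq c$. Write $\{{}^{I}E^r\}$ for the spectral sequence obtained by taking homology first along columns and $\{{}^{II}E^r\}$ for the one obtained by taking homology first along rows; both abut to $\H_\ast(\Tot(\mathcal{M}))$.

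Since each $P_p$ is finitely generated free, $\Hom_R(P_p,D)$ is a finite direct sum of copies of $D$, so the homology of the $p$-th column is $\Ext_R^{c-q}(\Hom_R(P_p,D),D)$, which by hypothesis (i) is $0$ for $q\neq c$ and equals $\Hom_R(\Hom_R(P_p,D),D)$ for $q=c$. Using the natural biduality isomorphism $\Hom_R(\Hom_R(P_\bullet,D),D)\simeq P_\bullet\otimes_R\Hom_R(D,D)$ for the complex $P_\bullet$ of finitely generated free modules, the next page is ${}^{I}E^2_{p,q}=0$ for $q\neq c$ and ${}^{I}E^2_{p,c}=\Tor_p^R(M,\Hom_R(D,D))$; the latter vanishes for $p>0$ because $\Hom_R(D,D)$ is flat, again by (i). Thus $\{{}^{I}E^r\}$ collapses and yields $\H_n(\Tot(\mathcal{M}))=0$ for $n\neq c$ and $\H_c(\Tot(\mathcal{M}))\cong M\otimes_R\Hom_R(D,D)$.

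For $\{{}^{II}E^r\}$, exactness of $\Hom_R(-,I^{c-q})$ shows that the $E^2$-page is ${}^{II}E^2_{p,q}=\Ext_R^{c-p}(\Ext_R^q(M,D),D)$, still converging to $\H_{p+q}(\Tot(\mathcal{M}))$. The key observation is that hypothesis (ii), applied with $i=c-q$ to the module $\Ext_R^{c-i}(M,D)=\Ext_R^q(M,D)$, forces $\Ext_R^{c-p}(\Ext_R^q(M,D),D)=0$ whenever $p\neq c-q$: either $\Ext_R^q(M,D)=0$, or, by (ii)(a), $\Ext_R^{c-j}(\Ext_R^{c-i}(M,D),D)$ is nonzero only for $j=i=c-q$. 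Hence ${}^{II}E^2$ is supported on the antidiagonal $p+q=c$, where the $(p,c-p)$-entry is $\Ext_R^{c-p}(\Ext_R^{c-p}(M,D),D)$; consequently every differential $d^r$ with $r\geq 2$ leaves this antidiagonal and none enters it, so ${}^{II}E^2_{p,c-p}\cong{}^{II}E^\infty_{p,c-p}$. Comparing with the abutment computed above, $M\otimes_R\Hom_R(D,D)\cong\H_c(\Tot(\mathcal{M}))$ acquires a finite filtration $0=\Phi^{-1}\subseteq\Phi^0\subseteq\cdots\subseteq\Phi^c=M\otimes_R\Hom_R(D,D)$ with $\Phi^p/\Phi^{p-1}\cong\Ext_R^{c-p}(\Ext_R^{c-p}(M,D),D)$.

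Finally I would check that this is an $\fa$-relative Cohen-Macaulay filtration. By hypothesis (ii)(b), whenever $\Ext_R^{c-p}(M,D)\neq 0$ the module $\Ext_R^{c-p}(\Ext_R^{c-p}(M,D),D)$ has a single nonvanishing local cohomology module with respect to $\fa$, namely $\H_\fa^p$, so it is $\fa$-relative Cohen-Macaulay of cohomological dimension $p$; and when $\Ext_R^{c-p}(M,D)=0$ the corresponding graded piece vanishes. Deleting the repeated terms of $\Phi^\bullet$ therefore leaves a strictly increasing filtration of $M\otimes_R\Hom_R(D,D)$ whose successive quotients are $\fa$-relative Cohen-Macaulay of strictly increasing cohomological dimension, which is precisely the statement that $M\otimes_R\Hom_R(D,D)$ is $\fa$-sequentially Cohen-Macaulay. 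I expect the only real work to be bookkeeping: fixing the double-complex indexing and the two iterated homologies so that the homomorphism direction contributes exactly the Ext-degree $c-p$ at which (ii)(a)--(b) apply, plus the complex-level biduality isomorphism; once the setup is in place, the collapse of $\{{}^{I}E^r\}$ and the degeneration of $\{{}^{II}E^r\}$ are both forced.
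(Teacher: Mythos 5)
Your proposal is correct and follows essentially the same route as the paper: the same double complex $\Hom_R(\Hom_R(P_p,D),I^{c-q})$, collapse of the first (column-wise) spectral sequence via hypothesis (i) and biduality to identify $\H_c(\Tot)$ with $M\otimes_R\Hom_R(D,D)$, degeneration of the second (row-wise) spectral sequence on the antidiagonal via (ii)(a), and identification of the filtration quotients as $\fa$-relative Cohen-Macaulay via (ii)(b). Your explicit remark about discarding the repeated (zero) steps of the filtration is a small clarification the paper leaves implicit, but the argument is the same.
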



\begin{thm}\label{1}Let $(R,\fm)$ be an $\fa$-relative Cohen-Macaulay local ring with $\cd(\fa,R)=c$. Then the following statements hold:
 \begin{itemize}
\item[(i)]{{ If $M$ is finite $\fa$-relative sequentially Cohen-Macaulay, then $\Ext^{c-i}_{R}(M,\D_{\fa})$ is zero or is $\fa$-relative Cohen-Macaulay of cohomological dimension $i$ for all $i\geq 0$.}}
\item[(ii)]{{ If for all $i\geq 0$, $\Ext^{c-i}_{R}(M,\D_{\fa})$ is zero or is $\fa$-relative Cohen-Macaulay of cohomological dimension $i$ and \emph{$\H_{\fa}^j(\Ext^{c-i}_{R}(\Ext^{c-i}_{R}(M,\D_{\fa}),\D_{\fa}))=0$} for all $j>i$, then $M\otimes_R\widehat R$ is $\fa$-relative sequentially Cohen-Macaulay.}}
\end{itemize}
\begin{proof}The statement $(i)$ is an immediate consequence of Lemma \ref{Lem}. For the statement $(ii)$, first in view of  Theorem \ref{3}(iii) one has $\widehat{R}\cong\Hom_{R}(\D_{\fa},\D_{\fa})$ and $\Ext_{R}^i(\D_{\fa},\D_{\fa})=0$ for all $i>0$. Also, by our assumption and \cite[Corollary 3.2]{MZ2}, we have $\Ext_R^{c-j}(\Ext_R^{c-i}(M,\D_{\fa}),\D_{\fa})=0$ if and only if $j\neq i$.

Now, we show that for all $i$, $\H_{\fa}^j(\Ext^{c-i}_{R}(\Ext^{c-i}_{R}(M,\D_{\fa}),\D_{\fa}))=0$ for all $j<i$, which to do this it is enough to show that $\Ext_{R}^j(R/\fa, \Ext^{c-i}_{R}(\Ext^{c-i}_{R}(M,\D_{\fa}),\D_{\fa}))=0$ for all $j<i$. To do this, notice that since $\Ext^{c-i}_{R}(M,\D_{\fa})$ is $\fa$-relative Cohen-Macaulay of cohomological dimension $i$, in view of \cite[Theorem 3.1(iii)]{MZ2}, we have $\Tor_{t-i}^R(R/\fa, \Ext^{c-i}_{R}(M,\D_{\fa}))\cong\Tor_{t}^R(R/\fa,\H_{\fa}^{i}(\Ext^{c-i}_{R}(M,\D_{\fa})))$ for all $t$. Therefore, one can use \cite[Corollary 3.2]{MZ2} to get the following isomorphisms
\[\begin{array}{rl}
\Ext_{R}^j(R/\fa, \Ext^{c-i}_{R}(\Ext^{c-i}_{R}(M,\D_{\fa}),\D_{\fa}))&\cong(\Tor_{j}^R(R/\fa,\H_{\fa}^{i}(\Ext^{c-i}_{R}(M,\D_{\fa}))))^{\vee}\\
&\cong(\Tor_{j-i}^R(R/\fa,\Ext^{c-i}_{R}(M,\D_{\fa})))^{\vee},
\end{array}\]
which implies that $\Ext_{R}^j(R/\fa, \Ext^{c-i}_{R}(\Ext^{c-i}_{R}(M,\D_{\fa}),\D_{\fa}))=0$ for all $j<i$, as required. Therefore, by
       our assumption we can see that $\Ext^{c-i}_{R}(\Ext^{c-i}_{R}(M,\D_{\fa}),\D_{\fa})$ is $\fa$-relative Cohen-Macaulay of cohomological dimension $i$, for all $i$. Therefore, the assertion is done by Lemma \ref{22}.
\end{proof}
\end{thm}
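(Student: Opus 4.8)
The plan is to obtain part~(i) directly from Lemma~\ref{Lem}, and to deduce part~(ii) by feeding the module $D=\D_{\fa}$ into Lemma~\ref{22}, whose conclusion says precisely that $M\otimes_R\Hom_R(\D_{\fa},\D_{\fa})\cong M\otimes_R\widehat{R}$ is $\fa$-relative sequentially Cohen-Macaulay; so the entire content of the argument is the verification that this particular $D$ satisfies the hypotheses of Lemma~\ref{22}.

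For part~(i): take the given filtration $0=M_0\subset\cdots\subset M_r=M$ of the finite $\fa$-relative sequentially Cohen-Macaulay module $M$ and put $c_i:=\cd(\fa,M_i/M_{i-1})$. Lemma~\ref{Lem} tells us that $\Ext^{c-j}_{R}(M,\D_{\fa})=0$ whenever $j\notin\{c_1,\dots,c_r\}$, while $\Ext^{c-c_i}_{R}(M,\D_{\fa})$ is $\fa$-relative Cohen-Macaulay of cohomological dimension $c_i$ for each $i$. Read index by index this is exactly the statement of~(i): for a fixed $i\geq 0$ the module $\Ext^{c-i}_{R}(M,\D_{\fa})$ is either zero (if $i$ is none of the $c_j$) or $\fa$-relative Cohen-Macaulay of cohomological dimension $i$ (if $i=c_j$ for some $j$). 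Nothing else is needed here.

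For part~(ii), hypothesis~(i) of Lemma~\ref{22} is immediate from Theorem~\ref{3}(iii): $\Hom_R(\D_{\fa},\D_{\fa})\cong\widehat{R}$ is $R$-flat and $\Ext^i_R(\D_{\fa},\D_{\fa})=0$ for $i>0$. Hypothesis~(ii)(a) comes from the duality properties of the relative canonical module: by assumption every nonzero $\Ext^{c-i}_{R}(M,\D_{\fa})$ is $\fa$-relative Cohen-Macaulay of cohomological dimension $i$, so by \cite[Corollary~3.2]{MZ2} its $\D_{\fa}$-dual $\Ext^{c-j}_{R}(\Ext^{c-i}_{R}(M,\D_{\fa}),\D_{\fa})$ vanishes exactly for $j\neq i$.

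The real work, and the step I expect to be the main obstacle, is hypothesis~(ii)(b) of Lemma~\ref{22}, namely showing $\H_{\fa}^{j}\big(\Ext^{c-i}_{R}(\Ext^{c-i}_{R}(M,\D_{\fa}),\D_{\fa})\big)=0$ exactly for $j\neq i$. The range $j>i$ is handed to us as a hypothesis of~(ii), so the point is the range $j<i$. Here I would reduce to proving $\Ext^{j}_{R}(R/\fa,\Ext^{c-i}_{R}(\Ext^{c-i}_{R}(M,\D_{\fa}),\D_{\fa}))=0$ for $j<i$, which forces $\gr(\fa,-)\geq i$ on that module and hence the vanishing of all its local cohomologies in degrees below $i$. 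To compute the relevant $\Ext$, use that $\Ext^{c-i}_{R}(M,\D_{\fa})$ is $\fa$-relative Cohen-Macaulay of cohomological dimension $i$, so by the Tor-shift isomorphism of \cite[Theorem~3.1(iii)]{MZ2} one has $\Tor^{R}_{t}(R/\fa,\H_{\fa}^{i}(\Ext^{c-i}_{R}(M,\D_{\fa})))\cong\Tor^{R}_{t-i}(R/\fa,\Ext^{c-i}_{R}(M,\D_{\fa}))$ for all $t$; combining this with Matlis duality and \cite[Corollary~3.2]{MZ2} gives
$$\Ext^{j}_{R}(R/\fa,\Ext^{c-i}_{R}(\Ext^{c-i}_{R}(M,\D_{\fa}),\D_{\fa}))\cong\big(\Tor^{R}_{j-i}(R/\fa,\Ext^{c-i}_{R}(M,\D_{\fa}))\big)^{\vee},$$
which is $0$ for $j<i$ since $\Tor^{R}_{n}(R/\fa,-)=0$ for $n<0$. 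Together with~(ii)(a) this shows $\Ext^{c-i}_{R}(\Ext^{c-i}_{R}(M,\D_{\fa}),\D_{\fa})$ is $\fa$-relative Cohen-Macaulay of cohomological dimension $i$ for every $i$, so all hypotheses of Lemma~\ref{22} hold and $M\otimes_R\widehat{R}$ is $\fa$-relative sequentially Cohen-Macaulay. The only thing to be careful about is which of the duality statements of \cite{MZ2} are available over the (possibly non-complete) local ring $R$, and the fact that the completion enters only at the very end through the identification $\Hom_R(\D_{\fa},\D_{\fa})\cong\widehat{R}$.
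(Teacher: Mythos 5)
Your proposal is correct and follows essentially the same route as the paper: part (i) read off from Lemma \ref{Lem}, and part (ii) by verifying the hypotheses of Lemma \ref{22} for $D=\D_{\fa}$ via Theorem \ref{3}(iii), \cite[Corollary 3.2]{MZ2}, and the same Tor-shift/duality computation ruling out $\H_{\fa}^{j}$ for $j<i$. No substantive differences from the paper's argument.
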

The following result is a consequence of Lemma \ref{22} which has already been proved by J. Herzog and D. Popescu in \cite[Theorem 2.4]{HEP}.

\begin{cor}Let $(R,\fm)$ be a $d$-dimensional Cohen-Macaulay local ring with the canonical $R$-module $\omega_{R}$ and $M$ be a finitely generated $R$-module. Then, the following statements are equivalent:
\begin{itemize}
\item[(i)]{{$M$ is sequentially Cohen-Macaulay.}}
\item[(ii)]{{$\Ext^{d-i}_{R}(M,\omega_{R})$ is zero or Cohen-Macaulay of dimension $i$ for all $i\geq 0.$}}
\end{itemize}
\begin{proof}
First notice that for all finitely generated $R$-modules $X$ we have $\dim_R(X)=\dim_{\widehat R}(X\otimes_R \widehat R)$, $\depth_R(X)=\depth_{\widehat R}(X\otimes_R \widehat R)$ and $\id_R(X)=\id_{\widehat R}(X\otimes_R \widehat R)=\depth R$. On the other hand, , it is clear to see that
\[\begin{array}{rl}
(1)~~~\omega_{\widehat{R}}&\cong\omega_R\otimes_R{\widehat{R}}\\
&\cong\Hom_R(\H_{\fm}^{d}(R),\E_R(R/\fm))\\
&\cong\Hom_{\widehat R}(\H_{\fm}^{d}(R)\otimes_R\widehat R,\E_R(R/\fm))\\
&\cong\Hom_{\widehat R}(\H_{\fm\widehat R}^{d}(\widehat R),\E_{\widehat R}(\widehat{R}/\fm\widehat R))\\
&=\D_{{\fm}\widehat R}.
\end{array}\]
Hence, for all integers $i$, one has
\[\begin{array}{rl}
(2)~~~\Ext^{d-i}_{\widehat{R}}(M\otimes_R{\widehat{R}},\D_{{\fm}\widehat R })&\cong\Ext^{d-i}_{\widehat R}(M\otimes_R \widehat R,\omega_{\widehat R})\\
&\cong\Ext^{d-i}_{R}(M,\omega_{R})\otimes_R \widehat R.
\end{array}\]
Therefore, for the implication $(ii)\Longrightarrow (i)$, in view of Theorem \ref{3} and the fact that $\widehat R$ is faithfully flat, one can deduce that
$\id_{R}(\omega_R)=\dim R$, $R\cong\Hom_R(\omega_{R},\omega_{R})$ and $\Ext_{R}^i(\omega_R,\omega_R)=0$ for all $i>0$. Also, by our assumption and Theorem \ref{1}(iv), for all $i= 0, 1,\ldots,\dim R $, the $i$-dimensional finitely generated Cohen-Macaulay $R$-modules $\Ext_{R}^{d-i}(M,\omega_{R})$, have the following conditions:
\begin{itemize}
\item[(a)]{{$\Ext_{R}^{d-j}(\Ext_{R}^{d-i}(M,\omega_{R}),\omega_{R})=0$ if and only if $j\neq i$.}}
\item[(b)]{{$\H_{\fa}^j(\Ext_{R}^{d-i}(\Ext_{R}^{d-i}(M,\omega_{R}),\omega_R))=0$ if and only if $j\neq i$.}}
\end{itemize}
Therefore, one can use Lemma \ref{22} to complete the proof. Note that, by use of definition, one can check that $M\otimes_R \widehat R$ is a sequentially Cohen-Macaulay $\widehat R$-module whenever $M$ is a sequentially Cohen-Macaulay $R$-module. Hence, the implication $(i)\Longrightarrow (ii)$ follows from Theorem \ref{1}(i) and the above two isomorphisms $(1)$ and $(2)$.
\end{proof}
\end{cor}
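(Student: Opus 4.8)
The plan is to obtain this classical theorem of Herzog and Popescu as the special case $\fa=\fm$ of the relative machinery already established, exploiting the fact that over a $d$-dimensional Cohen-Macaulay local ring $(R,\fm)$ the ideal $\fm$ behaves in the relative theory exactly as one hopes: $R$ itself is $\fm$-relative Cohen-Macaulay with $\cd(\fm,R)=d$, on finitely generated modules $\cd(\fm,-)$ is just $\dim_R(-)$, the notions of $\fm$-relative Cohen-Macaulay and of $\fm$-relative sequentially Cohen-Macaulay coincide with the usual ones, and the relative canonical module $\D_\fm=\Hom_R(\H^d_\fm(R),\E(R/\fm))$ is the canonical module of $\widehat R$. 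So the first step is to set up the dictionary precisely via flat base change: $\D_\fm\cong\omega_R\otimes_R\widehat R$ and, for every finitely generated $M$ and every $i$, $\Ext^{d-i}_{\widehat R}(M\otimes_R\widehat R,\D_{\fm\widehat R})\cong\Ext^{d-i}_R(M,\omega_R)\otimes_R\widehat R$. Since $\widehat R$ is faithfully flat over $R$, vanishing, Cohen-Macaulayness and dimension of a finitely generated module are detected after applying $-\otimes_R\widehat R$, and a finitely generated module $L$ is Cohen-Macaulay of dimension $i$ exactly when $\H^j_\fm(L)=0$ for all $j\neq i$; these observations will be used repeatedly.

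For $(i)\Rightarrow(ii)$, I would first note that if $M$ is sequentially Cohen-Macaulay over $R$ then, filtering by the base-changed filtration, $M\otimes_R\widehat R$ is sequentially Cohen-Macaulay over $\widehat R$. Applying Theorem \ref{1}(i) over $\widehat R$ with the ideal $\fm\widehat R$ gives that each $\Ext^{d-i}_{\widehat R}(M\otimes_R\widehat R,\D_{\fm\widehat R})$ is zero or Cohen-Macaulay of dimension $i$; transporting along the base change isomorphism above shows $\Ext^{d-i}_R(M,\omega_R)\otimes_R\widehat R$ is zero or Cohen-Macaulay of dimension $i$, and faithfully flat descent then yields $(ii)$.

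For $(ii)\Rightarrow(i)$, I would apply Lemma \ref{22} with $\fa=\fm$ and $D=\omega_R$. Condition (i) of that lemma holds because $\Hom_R(\omega_R,\omega_R)\cong R$ is flat and $\Ext^i_R(\omega_R,\omega_R)=0$ for $i>0$ (standard, or deducible from Theorem \ref{3}(iii) over $\widehat R$ together with faithfully flat descent). For condition (ii), fix $i$ and assume $N:=\Ext^{d-i}_R(M,\omega_R)\neq0$, so by hypothesis $N$ is Cohen-Macaulay of dimension $i$; then $N\otimes_R\widehat R$ is Cohen-Macaulay of dimension $i$, hence $\fm\widehat R$-relative Cohen-Macaulay, and Theorem \ref{3}(iv) over $\widehat R$ gives $\Ext^{d-j}_{\widehat R}(N\otimes_R\widehat R,\D_{\fm\widehat R})=0$ for $j\neq i$ and $\H^j_{\fm\widehat R}(\Ext^{d-i}_{\widehat R}(N\otimes_R\widehat R,\D_{\fm\widehat R}))=0$ for $j\neq i$; translating back through the base change isomorphisms and faithfully flat descent yields exactly conditions (ii)(a) and (ii)(b) of Lemma \ref{22} for $M$ and $D=\omega_R$. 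Lemma \ref{22} then gives that $M\otimes_R\Hom_R(\omega_R,\omega_R)=M\otimes_R R=M$ is $\fm$-sequentially Cohen-Macaulay, i.e.\ sequentially Cohen-Macaulay.

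The only place where anything beyond bookkeeping enters is the verification of conditions (ii)(a) and (ii)(b) of Lemma \ref{22}: the assertion that the canonical dual of a Cohen-Macaulay module of dimension $i$ is concentrated in homological degree $d-i$ and is itself Cohen-Macaulay of dimension $i$. This is the standard duality theory of the canonical module, and it is already packaged for us in Theorem \ref{3}(iv); everything else is the $\fa=\fm$ translation together with faithfully flat descent along $R\to\widehat R$.
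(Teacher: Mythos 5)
Your proposal is correct and follows essentially the same route as the paper: establish the base-change dictionary $\omega_R\otimes_R\widehat R\cong\D_{\fm\widehat R}$ and $\Ext^{d-i}_{\widehat R}(M\otimes_R\widehat R,\D_{\fm\widehat R})\cong\Ext^{d-i}_R(M,\omega_R)\otimes_R\widehat R$, deduce (i)$\Rightarrow$(ii) from Theorem \ref{1}(i) over $\widehat R$ plus faithfully flat descent, and prove (ii)$\Rightarrow$(i) by verifying the hypotheses of Lemma \ref{22} for $D=\omega_R$ via Theorem \ref{3}(iv) and descent, so that $M\cong M\otimes_R\Hom_R(\omega_R,\omega_R)$ is sequentially Cohen-Macaulay. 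No substantive differences from the paper's argument.
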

\section{Relative sequentially Cohen-Macaulay and local homology}
Based on the discussion made in the introduction, an Artinain $R$-module $M$ is co-Cohen-Macaulay if and only if $\H^{\fm}_i(M)=0$ for all $i\neq \Nd M$.
Hence, in this section, as a generalization of this concept, our main aim is to study the $R$-modules $X$ such that its local homology modules concerning an arbitrary ideal $\fa$, $\H_{i}(\Lam_{\fa}(X))$, has just one non-vanishing point. Indeed, in this section, we are going to provide some characterizations for relative Cohen-Macaulay and relative sequentially Cohen-Macaulay modules.

The starting point of this section is the next two lemmas which play an essential role in the proof of the next main theorem.
\begin{lem}\label{00}Let $X_{\bullet}:=\cdots \longrightarrow X_{l+1}\stackrel{d_{l+1}}\longrightarrow X_{l} \stackrel{d_{l}}\longrightarrow X_{l-1\longrightarrow\cdots }$ be an $R$-complex such that $H_{i}(X_{\bullet})=0$ for all $i\neq c$, then there exists the isomorphism $\Sigma^c\H_{c}(X_{\bullet})\simeq X_{\bullet}$ in derived category $\D(R)$.
\end{lem}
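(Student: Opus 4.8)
The statement asks us to show that an $R$-complex $X_\bullet$ with homology concentrated in a single degree $c$ is isomorphic in $\D(R)$ to the shifted module $\Sigma^c\H_c(X_\bullet)$. The plan is to construct an explicit roof (chain of quasi-isomorphisms) connecting the two complexes, using the truncation functors as the intermediary. First I would introduce the ``soft'' truncation above degree $c$, namely the subcomplex
\[
\tau_{\leq c}X_\bullet:\quad \cdots \longrightarrow 0 \longrightarrow \coker(d_{c+1}) \longrightarrow X_{c-1} \longrightarrow X_{c-2}\longrightarrow\cdots,
\]
where in degree $c$ we place $X_c/\im(d_{c+1})$ and in all degrees $<c$ we keep $X_\bullet$ unchanged, with all degrees $>c$ set to zero. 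There is a natural chain map $X_\bullet \longrightarrow \tau_{\leq c}X_\bullet$ which is the identity below degree $c$ and the canonical surjection in degree $c$. Because $\H_i(X_\bullet)=0$ for $i>c$, this map is a quasi-isomorphism: the homology in degrees $>c$ is zero on both sides, in degree $c$ the map $\H_c(X_\bullet)=\ker(\bar d_c)\to\H_c(\tau_{\leq c}X_\bullet)$ is an isomorphism since passing to the cokernel of $d_{c+1}$ does not change the kernel of the induced differential, and in degrees $<c$ nothing changes.

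Next I would apply the complementary ``hard'' truncation to $\tau_{\leq c}X_\bullet$, keeping only degrees $\geq c$. Since $\H_i(\tau_{\leq c}X_\bullet)=\H_i(X_\bullet)=0$ for all $i<c$, the differential $\coker(d_{c+1})\to X_{c-1}$ is injective, so the inclusion
\[
\Sigma^c\H_c(X_\bullet)\;=\;\bigl(\cdots\to 0\to \ker(\bar d_c)\to 0\to\cdots\bigr)\;\hooklongrightarrow\;\tau_{\leq c}X_\bullet
\]
(with $\ker(\bar d_c)$ placed in degree $c$) is also a quasi-isomorphism: both complexes have homology only in degree $c$, and there the inclusion induces the identity on $\ker(\bar d_c)\cong\H_c$. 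Composing, we obtain the roof
\[
\Sigma^c\H_c(X_\bullet)\;\xrightarrow{\ \simeq\ }\;\tau_{\leq c}X_\bullet\;\xleftarrow{\ \simeq\ }\;X_\bullet,
\]
which is precisely an isomorphism $\Sigma^c\H_c(X_\bullet)\simeq X_\bullet$ in $\D(R)$.

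The only genuinely delicate point is the verification that the two chain maps are quasi-isomorphisms in the relevant boundary degrees; this is a routine diagram chase comparing kernels and cokernels of the induced differentials at degree $c$, using in an essential way that the homology vanishes on \emph{both} sides of $c$. (If one prefers, the same conclusion follows from the standard fact that a complex with homology concentrated in degree $c$ is isomorphic in the derived category to its single homology module placed in that degree, combined with the shift identity $\H_i(\Sigma^cN)=\H_{i-c}(N)$ recalled in Section~2.) No further input is needed, so I expect no substantive obstacle beyond this bookkeeping.
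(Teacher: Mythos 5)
Your construction is essentially the paper's own proof: the paper passes through exactly the same soft truncation (the complex $\cdots\to 0\to X_c/\Im d_{c+1}\to X_{c-1}\to\cdots$, denoted $\subset_cX_\bullet$ there) and uses the same roof $\Sigma^c\H_c(X_\bullet)\stackrel{\simeq}{\longrightarrow}\subset_cX_\bullet\stackrel{\simeq}{\longleftarrow}X_\bullet$; it merely records, in addition, the symmetric roof through the subcomplex with $\ker d_c$ in degree $c$, which you do not need.

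One intermediate assertion of yours is false, though harmlessly so: the vanishing of $\H_i(X_\bullet)$ for $i<c$ does \emph{not} make the induced differential $\bar d_c\colon X_c/\Im d_{c+1}\to X_{c-1}$ injective. Its kernel is precisely $\ker d_c/\Im d_{c+1}=\H_c(X_\bullet)$, the one homology module you are not assuming to vanish; injectivity of $\bar d_c$ would force $\H_c(X_\bullet)=0$, i.e.\ $X_\bullet\simeq 0$ in $\D(R)$. Fortunately the claim is never used: the map you need is the inclusion of the subcomplex consisting of $\ker\bar d_c=\H_c(X_\bullet)$ placed in degree $c$, which is automatically a chain map (no injectivity required), and it is a quasi-isomorphism exactly because $\H_i(\tau_{\leq c}X_\bullet)=\H_i(X_\bullet)=0$ for $i<c$ while the induced map in degree $c$ is the identity --- which is the correct justification already contained in the rest of your sentence.
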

\begin{proof}Consider the following quasi-isomorphisms:
 $$\begin{CD}
\Sigma^{c}\H_c(X_{\bullet}):\cdots @>>>0 @>>> 0 @>>> \H_c(X_{\bullet}) @>>>0 @>>>0@>>>\cdots\\
@. @V0VV @VV0V @VViV@V0VV@V0VV \\
\subset_c X_{\bullet}:\cdots @>>> 0 @>>>0 @>>>\frac{X_{c}}{\Im d_{c+1}} @>\overline{d_c}>>X_{c-1}@>{d_{c-1}}>>X_{c-2}@>>> \cdots \\
@. @A0AA @A0AA @A\pi AA@|@| \\
X_{\bullet}:\cdots @>>> X_{c+2} @>{d_{c+2}}>>X_{c+1} @>{d_{c+1}}>> X_{c} @>{d_c}>>X_{c-1}@>{d_{c-1}}>>X_{c-2}@>>> \cdots ,
\end{CD}$$ and

 $$\begin{CD}
X_{\bullet}:\cdots @>>> X_{c+2} @>{d_{c+2}}>>X_{c+1} @>{d_{c+1}}>> X_{c} @>{d_c}>>X_{c-1}@>>> \cdots\\
@. @| @| @AiAA@A0AA\\
{X_{\bullet}}_{c}\supset:\cdots@>>> X_{c+2}@>d_{c+2}>>X_{c+1} @>iod_{c+1}>>\ker d_{c}@>>>0@>>>\cdots\\
@. @V0VV @VVV @V\pi VV @VV0V\\
\Sigma^{c}\H_{c}(X_{\bullet}):\cdots @>>>0 @>>>0@>>>\H_{c}(X_{\bullet}) @>>>0 @>>>\cdots\\
\end{CD}$$
Therefore, there exists the quasi-isomorphisms
\[\begin{cases}
\Sigma^{c}\H_{c}(X_{\bullet})\stackrel{\simeq}\longrightarrow\subset_c X_{\bullet}\stackrel{\simeq}\longleftarrow X_{\bullet}&\text{and}\\
\Sigma^{c}\H_{c}(X_{\bullet})\stackrel{\simeq}\longleftarrow {X_{\bullet}} _c\supset\stackrel{\simeq}\longrightarrow X_{\bullet}& ,
\end{cases}\]
where $\subset_cX_{\bullet}$ and ${X_{\bullet}} _c\supset$ are the soft left truncation and the soft right truncation
of the complex $X_{\bullet}$, respectively, for more detail see \cite[A.1.14]{CF0}. Therefore, by the definition of isomorphism in derived category $\D(R)$ one has $\Sigma^{-c}\H^{c}_{\fa}(M)$ and $\Gamma_{\fa}(I^\bullet)$ are isomorphic in derived category modules $\D(R)$.
\end{proof}

\begin{lem}\label{000}Let $M$ be an $R$-module, $\fa$ be an ideal of $R$ such that for a fix integer $c$, \emph{$\H_{\fa}^c(M)\neq 0$} and \emph{$\H_{i}(\Lam_{\fa}(\H_{\fa}^c(M)))=0$} for all $i\neq c$. Then, \emph{$\H_{c}(\Lam_{\fa}(\H_{\fa}^c(M)))$} is a nonzero $\fa$-relative Cohen-Macaulay $R$-module of cohomological dimension $c$.
\end{lem}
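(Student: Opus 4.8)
The plan is to combine Lemma \ref{00} with the known behaviour of derived local homology under the Greenlees--May / Matlis dual equivalence between $\fa$-torsion and $\fa$-complete complexes. Write $D_{\fa}(M):=\H_{\fa}^c(M)$; by hypothesis this is an $\fa$-torsion $R$-module with $\H_i(\Lam_{\fa}(D_{\fa}(M)))=0$ for $i\neq c$. The first step is to recognize that the complex $\Lam_{\fa}(P_{\bullet})$, where $P_{\bullet}$ is a flat (equivalently, by the \v{C}ech description recalled in the Prerequisites, $\Hom_R(\check L_{\underline x}, -)$-computed) resolution of $D_{\fa}(M)$, has homology concentrated in degree $c$. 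Hence Lemma \ref{00} applies and gives an isomorphism $\Lam_{\fa}(D_{\fa}(M))\simeq \Sigma^c N$ in $\D(R)$, where I set $N:=\H_c(\Lam_{\fa}(D_{\fa}(M)))$. The nonvanishing of $N$ follows because the natural surjection $\phi_c\colon \H_c(\Lam_{\fa}(D_{\fa}(M)))\twoheadrightarrow \H_c^{\fa}(D_{\fa}(M))$ (recalled in the Prerequisites, from \cite{GMMM}) has target $\H_c^{\fa}(\H_{\fa}^c(M))$, which is nonzero: indeed $\H_{\fa}^c(M)$ is a nonzero $\fa$-torsion module, so its top local homology $\H_0^{\fa}$ after the appropriate shift — more precisely $\H_c^{\fa}$ of an object that is itself an $\fa$-torsion module placed so that $\ugamma_{\fa}$ is the identity — is nonzero by the standard fact that $\H_i^{\fa}$ and $\H^j_{\fa}$ of $\fa$-torsion modules detect each other at the extreme degree.

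The second and main step is to compute the local cohomology of $N$ with respect to any ideal $\fb\supseteq\fa$ (in particular $\fb=\fa$) and show it is concentrated in degree $c$. Here I would use the adjunction $\ugamma_{\fa}\Lam_{\fa}\simeq \ugamma_{\fa}$ and $\Lam_{\fa}\ugamma_{\fa}\simeq\Lam_{\fa}$ on suitable subcategories of $\D(R)$ (Greenlees--May duality, as encoded in \cite{SC2}), together with the fact that $\H_{\fa}^c(M)$, being $\fa$-torsion, satisfies $\ugamma_{\fa}(\H_{\fa}^c(M))\simeq \H_{\fa}^c(M)$. Applying $\ugamma_{\fa}(-)$ to the derived-category isomorphism $\Lam_{\fa}(\H_{\fa}^c(M))\simeq \Sigma^c N$ yields $\ugamma_{\fa}(\Sigma^c N)\simeq \ugamma_{\fa}\Lam_{\fa}(\H_{\fa}^c(M))\simeq\ugamma_{\fa}(\H_{\fa}^c(M))\simeq \H_{\fa}^c(M)$, so that $\ugamma_{\fa}(N)\simeq \Sigma^{-c}\H_{\fa}^c(M)$. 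Taking homology gives $\H^i_{\fa}(N)=\H_{-i}(\ugamma_{\fa}(N))=\H_{-i-c}(\H_{\fa}^c(M))$, which is $\H_{\fa}^c(M)$ when $i=c$ and $0$ otherwise. Thus $N$ has exactly one nonvanishing local cohomology module with respect to $\fa$, namely in degree $c$, so $\cd(\fa,N)=c$ and $N$ is $\fa$-relative Cohen-Macaulay by Definition (noting that the defining condition of relative Cohen-Macaulayness is precisely "exactly one nonvanishing $\H^i_{\fa}$", which does not require finite generation).

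The step I expect to be the main obstacle is making the Greenlees--May adjunction isomorphisms precise in the form needed: one must be careful that $\H_{\fa}^c(M)$ is an honest module (hence bounded, so all the relevant derived functors are defined and the isomorphisms $\ugamma_{\fa}\Lam_{\fa}\simeq\ugamma_{\fa}$, $\Lam_{\fa}\ugamma_{\fa}\simeq\Lam_{\fa}$ hold on it), and that the shift bookkeeping in Lemma \ref{00} matches the indexing conventions for $\H^i_{\fa}(-)=\H_{-i}(\ugamma_{\fa}(-))$ and $\H_i(\Lam_{\fa}(-))$ used throughout the Prerequisites. Once the identification $\ugamma_{\fa}(N)\simeq\Sigma^{-c}\H_{\fa}^c(M)$ is established the rest is immediate. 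A secondary point requiring a line of justification is the nonvanishing of $N$: it suffices to observe that $N$ maps onto $\H_c^{\fa}(\H_{\fa}^c(M))$ via $\phi_c$, and this latter module is nonzero because, for the $\fa$-torsion module $T=\H_{\fa}^c(M)$, one has $\H_c^{\fa}(T)=\H_0(\Sigma^{-c}\cdot)$-type reasoning forcing it to contain (a dual of) a nonzero quotient; alternatively it is immediate from $\ugamma_{\fa}(N)\simeq\Sigma^{-c}T\neq 0$ once that isomorphism is in hand, which makes the nonvanishing a corollary of the main computation rather than a separate argument.
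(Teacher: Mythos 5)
Your proposal is correct and follows essentially the same route as the paper's own proof: apply Lemma \ref{00} to identify $\Lam_{\fa}(\H_{\fa}^c(M))$ with $\Sigma^c N$, where $N:=\H_c(\Lam_{\fa}(\H_{\fa}^c(M)))$, then apply $\ugamma_{\fa}$ together with $\ugamma_{\fa}\Lam_{\fa}\simeq\ugamma_{\fa}$ and the $\fa$-torsionness of $\H_{\fa}^c(M)$ to get $\H^i_{\fa}(N)\cong\H^{i-c}_{\fa}(\H_{\fa}^c(M))$, which is concentrated (and nonzero) exactly at $i=c$. Two minor remarks: the index in your final homology computation should be $\H_{c-i}(\H_{\fa}^c(M))$ rather than $\H_{-i-c}(\H_{\fa}^c(M))$ (your stated conclusion is nevertheless the right one), and the nonvanishing of $N$ should be taken, as in your alternative remark, as a corollary of $\H^c_{\fa}(N)\cong\H_{\fa}^c(M)\neq 0$, since your first argument via the surjection $\phi_c$ onto $\H^{\fa}_c(\H_{\fa}^c(M))$ rests on a nonvanishing claim that is not justified.
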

\begin{proof}First notice that by our assumption and Lemma \ref{00} we have the isomorphism $\Sigma^c\H_{c}(\Lam_{\fa}(\H_{\fa}^c(M)))\simeq\Lam_{\fa}(\H_{\fa}^c(M))$, and so $\H_{c}(\Lam_{\fa}(\H_{\fa}^c(M)))\simeq\Sigma^{-c}\Lam_{\fa}(\H_{\fa}^c(M)).$
Next, consider the following isomorphisms in derived category $\D(R)$:
\[\begin{array}{rl}
\ugamma_{\fa}(\H_{c}(\Lam_{\fa}(\H_{\fa}^c(M))))&\simeq\ugamma_{\fa}(\Sigma^{-c}\Lam_{\fa}(\H_{\fa}^c(M)))\\
&\simeq\check{C}_{\underline{x}}\utp_{R}\Sigma^{-c}(\Lam_{\fa}(\H_{\fa}^c(M)))\\
&\simeq\Sigma^{-c}(\check{C}_{\underline{x}}\utp_{R}\Lam_{\fa}(\H_{\fa}^c(M)))\\
&\simeq\Sigma^{-c}\ugamma_{\fa}(\Lam_{\fa}(\H_{\fa}^c(M)))\\
&\simeq\Sigma^{-c}\ugamma_{\fa}(\H_{\fa}^c(M)),
\end{array}\]
where the second and last isomorphisms follow from \cite[Theorem 7.4.4]{SC2} and \cite[Theorem 9.1.3]{SC2}, respectively. Therefore, for all $i$, one has the following isomorphisms:
\[\begin{array}{rl}
\H^{i}_{\fa}(\H_{c}(\Lam_{\fa}(\H_{\fa}^c(M))))&\cong \H_{-i}(\ugamma_{\fa}(\H_{c}(\Lam_{\fa}(\H_{\fa}^c(M))))\\
&\cong\H_{-i}(\Sigma^{-c}\ugamma_{\fa}(\H_{\fa}^c(M)))\\
&\cong\H_{-i+c}(\ugamma_{\fa}(\H_{\fa}^c(M)))\\
&\cong\H^{i-c}_{\fa}(\H_{\fa}^c(M)),
\end{array}\]
which show that $\H^{i}_{\fa}(\H_{c}(\Lam_{\fa}(\H_{\fa}^c(M))))=0$ if and only if $i\neq c$, as required.
\end{proof}

The following theorem, which is one of our main results, provides a characterization of a finite $\fa$-relative Cohen-Macaulay $R$-module $M$ in terms of vanishing of the local homology modules of the local cohomology module $\H_{\fa}^{\cd(\fa, M)}(M)$. Furthermore, it shows that the converse of \cite[Theorem 1.2]{SC1} is also true.
\begin{thm}\label{2}Let $\fa$ be an ideal of $R$ and $M$ be an $R$-module with $\cd(\fa,M)=c$. Then, the following statements hold:
\begin{itemize}
\item[(i)]{{ If $M$ is $\fa$-relative Cohen-Macaulay, then there exists the following isomorphism: \emph{$$\underset{n\in\mathbb{N}}\varprojlim\Tor^R_{i+c}(R/\fa^n, \H^{c}_{\fa}(M))\cong\underset{n\in\mathbb{N}}\varprojlim\Tor^R_{i}(R/\fa^n, M),$$} for all integers $i$. In particular, we have \emph{$\H^{\fa}_{j+c}(\H_{\fa}^c(M))\cong\H^{\fa}_j(M)$} for all $j$.}}
\item[(ii)]{{ If $M$ is finite $\fa$-relative Cohen-Macaulay, then \emph{\[\H^{\fa}_i(\H_{\fa}^c(M))\cong\begin{cases}
       \widehat{M}^{\fa} & \text{if $i=c$}\\
       0& \text{otherwise},
       \end{cases} \]}}}
       \item[(iii)]{{ $M$ is finite $\fa$-relative Cohen-Macaulay if and only if \emph{\[\H_{i}(\Lam_{\fa}(\H_{\fa}^c(M))\cong\begin{cases}
       \widehat{M}^{\fa} & \text{if $i=c$}\\
       0& \text{otherwise},
       \end{cases} \]}}}
\end{itemize}

\begin{proof}$(i)$. Let
$$I^{\bullet}: 0 \longrightarrow I_{0}\stackrel{d_0}\longrightarrow I_{-1}\stackrel{d_{-1}}\longrightarrow \cdots\longrightarrow I_{-c+1}\stackrel{d_{-c+1}}\longrightarrow I_{-c}\stackrel{d_{-c}}\longrightarrow I_{-c-1} \longrightarrow\cdots$$
be a deleted injective resolution for $M$. By applying the functor $\Gamma_{\fa}(-)$ on this complex, we obtain the complex $\Gamma_{\fa}(I^{\bullet})$. As, $\H_{\fa}^i(M)=\H_{-i}(\Gamma_{\fa}(I^{\bullet}))$, by our assumption and Lemma \ref{00} we have $\Sigma^{-c}\H^{c}_{\fa}(M)$ and $\Gamma_{\fa}(I^\bullet)$ are isomorphic in derived category modules $\D(R)$. Now, as $\Gamma_{\fa}(I^\bullet)$ is a bounded above complex of injective modules, by \cite[Lemma 3.3.8]{CF1} one has $\Gamma_{\fa}(I^\bullet)$ is a semi-injective complex. So, in view of \cite[Proposition 4.1.11]{CF1} there exists a quasi-isomorphism $f:\Sigma^{-c}\H^{c}_{\fa}(M)\stackrel{\simeq}\longrightarrow\Gamma_{\fa}(I^\bullet)$. Then, we have the following functorial morphisms in the derived category $\D(R)$:
\[\begin{array}{rl}
(1)~~-\otimes_R \Sigma^{-c}\H^{c}_{\fa}(M)&\stackrel{\Id(-)\otimes_R f}\longrightarrow -\otimes_R \Gamma_{\fa}(I^\bullet)\\
&\stackrel{\psi(-,I^\bullet)}\longrightarrow\ugamma_{\fa}(-\otimes_R I^\bullet)\\
&\stackrel{i(-\otimes_R I^\bullet)}\longrightarrow -\otimes_R I^\bullet\\
\end{array}\]
Here, we should notice that for any semi-projective complex $P_{\bullet}$, the functorial morphisms $i_{P_\bullet}\otimes_R f$,  by \cite[Theorem 3.4.5]{CF1}, is quasi-isomorphism and $\psi(-,I^\bullet)$ follows from \cite[Corollary 3.3.1]{LIP} which is always an isomorphism. Also, the morphism $i(-\otimes_R I^\bullet)$ follows from \cite[Corollary 3.2.1]{LIP} which is isomorphism for all $R$-complex $C$ such that $\Supp_R(C)\subseteq\V(\fa)$. Therefore, there is a functorial isomorphism $f_{P_\bullet}:P_\bullet\otimes_R \Sigma^{-c}\H^{c}_{\fa}(M)\longrightarrow P_\bullet\otimes_R I^\bullet$ in Derided category $\D(R)$ for each deleted projective resolution  $P_\bullet$ of an $\fa$-torsion $R$-module $N$. Now, let $n<m$ be natural numbers and $P_{\bullet}^m$ and $P_{\bullet}^n$ be deleted projective resolutions for $R/\fa^m$ and $R/\fa^n$, respectively. Consider the natural map $\gamma_n^m: R/\fa^m\longrightarrow R/\fa^n$ which induces the $R$-complex morphism $({\gamma}_n^m)_{i\in \mathbb{N}}: P_{\bullet}^m\longrightarrow P_{\bullet}^n$. Therefore, one can obtain the following commutative diagrams
$$\begin{CD}
P_\bullet^m\otimes_R \Sigma^{-c}\H^{c}_{\fa}(M)@>f_{P_\bullet^m}>> P_\bullet^m\otimes_R I^{\bullet}\\
@V({\gamma}_n^m)_{i\in \mathbb{N}}\otimes_R \Id_{\Sigma^{-c}{H^{c}(M)}}VV @V({\gamma}_n^m)_{i\in \mathbb{N}}\otimes_R Id_{I^\bullet}VV \\
P_\bullet^n\otimes_R \Sigma^{-c}\H^{c}_{\fa}(M)@>f_{P_\bullet^n}>> P_\bullet^n\otimes_R I^{\bullet}\\
\end{CD}$$
Now, taking homology of the above commutative diagram yields the following commutative diagram:
$$\begin{CD}
\Tor^R_{i+c}(R/\fa^m, \H^{c}_{\fa}(M))@>\dagger>>\Tor^R_{i}(R/\fa^m,M)\\
@VVV @VVV \\
\Tor^R_{i+c}(R/\fa^n, \H^{c}_{\fa}(M))@>\ddagger>>\Tor^R_{i}(R/\fa^n,M),\\
\end{CD}$$
for all integers $i$, where $\dagger$ and $\ddagger$ are isomorphisms. Here, one should notice that $\H_i(P_\bullet^m\otimes_R \Sigma^{-c}\H^{c}_{\fa}(M))=\H_i(\Sigma^{-c}(P_\bullet^m\otimes_R \H^{c}_{\fa}(M)))=\H_{i+c}(P_\bullet^m\otimes_R \H^{c}_{\fa}(M))=\Tor^R_{i+c}(R/\fa^m, \H^{c}_{\fa}(M)),$ and also $\H_i(P_\bullet^m\otimes_R I^{\bullet})=\Tor^R_{i}(R/\fa^m,M).$ Therefore, one can deduce that $\underset{n\in\mathbb{N}}\varprojlim\Tor^R_{i+c}(R/\fa^n, \H^{c}_{\fa}(M))\cong\underset{n\in\mathbb{N}}\varprojlim\Tor^R_{i}(R/\fa^n, M)$ and thus $\H^{\fa}_{i+c}(\H_{\fa}^c(M))\cong\H_{i}^{\fa}(M)$, as required.

$(ii).$ It follows from the isomorphism in $(i)$ and \cite[Remark 3.2]{CN}(ii).

 $(iii)$. For the implication $\Longrightarrow$, first in view of Lemma \ref{00} we have the isomorphism $\Sigma^{-c}\H_{\fa}^c(M)\simeq\ugamma_{\fa}(M)$. On the other hand, by \cite[Theorem 9.1.3]{SC2} there is the isomorphism $\Lam_{\fa}(\ugamma_{\fa}(M)\simeq\Lam_{\fa}(M)$ in derived category $\D(R)$, and also in view of \cite[Theorem 7.5.12]{SC2} one has $\Lam_{\fa}(-)\simeq\uhom_R(\check{C}_{\underline{x}}, -)$. Therefore, one can get the following isomorphisms in the derived category $\D(R)$:
 \[\begin{array}{rl}
\H_{i}(\Lam_{\fa}(M))&\cong\H_{i}(\Lam_{\fa}(\ugamma_{\fa}(M))\\
&\cong\H_{i}(\Lam_{\fa}(\Sigma^{-c}\H_{\fa}^c(M))\\
&\cong\H_{i}(\uhom_R(\check{C}_{\underline{x}},\Sigma^{-c}\H_{\fa}^c(M)))\\
&\cong\H_{i}(\Sigma^{-c}\uhom_R(\check{C}_{\underline{x}},\H_{\fa}^c(M)))\\
&\cong\H_{i+c}(\uhom_R(\check{C}_{\underline{x}},\H_{\fa}^c(M)))\\
&\cong\H_{i+c}(\Lam_{\fa}(\H_{\fa}^c(M))).
\end{array}\]
Hence, by \cite[Remark 2.1]{CN}(iii) the assertion is done. For the implication $\Longleftarrow$, by Lemma \ref{000} and our assumption one has $\H_{\fa}^i(\widehat{M}^{\fa})=0$ for all $i\neq c$.
On the other hand, by the Flat Base Change Theorem and Independent Theorem we have the following isomorphisms for all integers $i$:
 \[\begin{array}{rl}
 \H_{\fa}^{i}(\widehat{M}^{\fa})&\cong\H_{\fa\widehat{R}^{\fa}}^{i}(M\otimes_R\widehat{R}^{\fa})\\
 &\cong\H_{\fa}^{i}(M)\otimes_R\widehat{R}^{\fa}\\
 &\cong\H_{\fa}^{i}(M).
\end{array}\]
Here we should notice the last isomorphism holds because the $R$-module $\H_{\fa}^{i}(M)$ is $\fa$-torsion, and so has $\widehat R^{\fa}$-module structure and also, by \cite[Lemma 1.4]{KLS}, there exists the isomorphism  $\H_{\fa}^{i}(M)\otimes_R\widehat{R}^{\fa}\cong\H_{\fa}^{i}(M)$ both as $R$-modules and $\widehat R^{\fa}$-modules. Therefore, we can deduce that $\H_{\fa}^i(M)=0$ for all $i\neq c$ which says that $M$ is $\fa$-relative Cohen-Macaulay.
\end{proof}
\end{thm}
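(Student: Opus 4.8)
The plan is to reduce each part to a single derived-category identification and then read off homology. For part~(i), I would fix a deleted injective resolution $I^{\bullet}$ of $M$ and apply $\Gamma_{\fa}(-)$; since $M$ is $\fa$-relative Cohen-Macaulay with $\cd(\fa,M)=c$, the complex $\Gamma_{\fa}(I^{\bullet})$ has homology concentrated in cohomological degree $c$, where it equals $\H^{c}_{\fa}(M)$, so Lemma~\ref{00} gives $\ugamma_{\fa}(M)\simeq\Sigma^{-c}\H^{c}_{\fa}(M)$ in $\D(R)$. As $\Gamma_{\fa}(I^{\bullet})$ is a bounded-above complex of injectives it is semi-injective, so this isomorphism is realised by an honest quasi-isomorphism $\Sigma^{-c}\H^{c}_{\fa}(M)\to\Gamma_{\fa}(I^{\bullet})$. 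I would then tensor with a deleted projective resolution $P^{n}_{\bullet}$ of $R/\fa^{n}$ and use the standard comparison morphisms $P^{n}_{\bullet}\otimes_{R}\Gamma_{\fa}(I^{\bullet})\simeq\ugamma_{\fa}(P^{n}_{\bullet}\otimes_{R}I^{\bullet})\simeq P^{n}_{\bullet}\otimes_{R}I^{\bullet}$, the last because $P^{n}_{\bullet}\otimes_{R}I^{\bullet}\simeq R/\fa^{n}\utp_{R}M$ has support inside $\V(\fa)$. This yields a quasi-isomorphism $P^{n}_{\bullet}\otimes_{R}\Sigma^{-c}\H^{c}_{\fa}(M)\simeq P^{n}_{\bullet}\otimes_{R}I^{\bullet}$; taking $i$-th homology turns the left side into $\Tor^{R}_{i+c}(R/\fa^{n},\H^{c}_{\fa}(M))$ via the shift and the right side into $\Tor^{R}_{i}(R/\fa^{n},M)$. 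Since all of this is natural in $n$ with respect to the transition maps $R/\fa^{m}\to R/\fa^{n}$, passing to $\varprojlim_{n}$ gives the asserted isomorphism, and the last sentence of~(i) is simply the definition of the local homology functors.

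Part~(ii) should follow at once: when $M$ is finitely generated, $\H^{\fa}_{j}(M)=0$ for $j\neq 0$ and $\H^{\fa}_{0}(M)\cong\widehat{M}^{\fa}$, so substituting $i=j+c$ into~(i) reads $\H^{\fa}_{i}(\H^{c}_{\fa}(M))\cong\H^{\fa}_{i-c}(M)$, which is $\widehat{M}^{\fa}$ at $i=c$ and $0$ otherwise. For the forward implication of~(iii) I would run a chain of derived isomorphisms: Lemma~\ref{00} again gives $\ugamma_{\fa}(M)\simeq\Sigma^{-c}\H^{c}_{\fa}(M)$; the completion--torsion comparison of \cite{SC2} gives $\Lam_{\fa}(\ugamma_{\fa}(M))\simeq\Lam_{\fa}(M)$; and $\Lam_{\fa}(-)\simeq\uhom_{R}(\check{C}_{\underline{x}},-)$, which commutes with the shift $\Sigma^{-c}$. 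Chaining these produces $\H_{i}(\Lam_{\fa}(M))\cong\H_{i+c}(\Lam_{\fa}(\H^{c}_{\fa}(M)))$ for every $i$, and since $M$ is finitely generated the left side vanishes for $i\neq 0$ and equals $\widehat{M}^{\fa}$ at $i=0$, which is precisely the stated formula.

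For the converse in~(iii) the tool is Lemma~\ref{000}. Its hypotheses are met: $\H^{c}_{\fa}(M)\neq 0$ because $\cd(\fa,M)=c$, and $\H_{i}(\Lam_{\fa}(\H^{c}_{\fa}(M)))=0$ for $i\neq c$ by assumption, so Lemma~\ref{000} gives $\H^{i}_{\fa}(\H_{c}(\Lam_{\fa}(\H^{c}_{\fa}(M))))=0$ for all $i\neq c$. The assumed isomorphism $\H_{c}(\Lam_{\fa}(\H^{c}_{\fa}(M)))\cong\widehat{M}^{\fa}$ then yields $\H^{i}_{\fa}(\widehat{M}^{\fa})=0$ for $i\neq c$, and Flat Base Change together with the Independence Theorem transports this back to $\H^{i}_{\fa}(M)=0$ for $i\neq c$; here one uses that $\H^{i}_{\fa}(M)$ is $\fa$-torsion, so $\H^{i}_{\fa}(M)\otimes_{R}\widehat{R}^{\fa}\cong\H^{i}_{\fa}(M)$ by \cite{KLS}, and that $\widehat{M}^{\fa}\cong M\otimes_{R}\widehat{R}^{\fa}$ because $M$ is finitely generated. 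Hence $M$ is $\fa$-relative Cohen-Macaulay.

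I expect the main obstacle to be the naturality in $n$ used in part~(i): one has to check that the comparison quasi-isomorphism $P^{n}_{\bullet}\otimes_{R}\Sigma^{-c}\H^{c}_{\fa}(M)\simeq P^{n}_{\bullet}\otimes_{R}I^{\bullet}$ is genuinely functorial in the complex $P^{n}_{\bullet}$, so that applying it to the morphism of projective resolutions induced by $R/\fa^{m}\to R/\fa^{n}$ produces a commutative square of complexes, hence a commutative square on homology, which is what legitimises passing to the inverse limit. Each constituent map (the $\ugamma$-comparison, the inclusion of the torsion subcomplex, the semi-injective resolution) is standard, but this functoriality has to be tracked carefully through the semi-projective/semi-injective formalism rather than assumed. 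A secondary point worth flagging is that the word ``finite'' in~(iii) is doing real work in the converse: both $\widehat{M}^{\fa}\cong M\otimes_{R}\widehat{R}^{\fa}$ and the vanishing of higher local homology fail for arbitrary modules, so $M$ should be understood to be finitely generated throughout~(iii).
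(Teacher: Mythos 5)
Your proposal follows essentially the same route as the paper's proof: part (i) via Lemma \ref{00}, the semi-injectivity of $\Gamma_{\fa}(I^{\bullet})$, the comparison morphisms through $\ugamma_{\fa}(-\otimes_R I^{\bullet})$, naturality in $n$ and passage to the inverse limit; part (ii) from the vanishing of higher local homology of a finitely generated module; and part (iii) by the same chain of derived isomorphisms with $\Lam_{\fa}(-)\simeq\uhom_R(\check{C}_{\underline{x}},-)$ for the forward direction and Lemma \ref{000} plus flat base change, independence, and the $\fa$-torsion argument for the converse. It is correct, and your remarks about functoriality in $n$ and the role of finiteness in (iii) match points the paper itself handles.
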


In \cite[Proposition 4.1]{CN} it is shown that for any Artinian $R$-module $M$ there is an isomorphism $\H^{\fa}_i(M)\cong\H_{i}(\Lam_{\fa}(M))$ for all $i$. Also we know that $\H_{\fm}^i(M)$ is Artinian for all $i$ and all finitely generated $R$-modules $M$. So, the following result is an immediate consequence of the previous theorem.

\begin{cor}\label{Cor}Let $(R,\fm)$ be a local ring and $M$ be a finitely generated $R$-module of dimension $d$. Then the following statements are equivalent:
\begin{itemize}
\item[(i)]{{ $M$ is Cohen-Macaulay.}}
\item[(ii)]{{ \emph{\[\H^{\fm}_i(\H_{\fm}^d(M))\cong\begin{cases}
       \widehat{M}^{\fm} & \text{if $i=d$}\\
       0& \text{otherwise}.
       \end{cases} \]
}}}
\end{itemize}
\end{cor}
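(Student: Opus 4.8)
The plan is to derive Corollary \ref{Cor} directly from Theorem \ref{2}(iii) by specializing the ideal $\fa$ to the maximal ideal $\fm$ and exploiting the Artinianness of the top local cohomology module. First I would recall that for a finitely generated module $M$ over a local ring $(R,\fm)$ one has $\cd(\fm,M)=\dim_R(M)=d$, so that $M$ being $\fm$-relative Cohen-Macaulay is, by the remarks following Definition \ref{*33}, exactly the condition that $M$ is Cohen-Macaulay in the usual sense. This identifies statement (i) of the corollary with the hypothesis ``$M$ is finite $\fm$-relative Cohen-Macaulay'' appearing in Theorem \ref{2}(iii).

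Next I would bridge the two flavours of ``completion homology''. The module $\H_{\fm}^d(M)$ is Artinian (this is classical, and is recalled just before the corollary), and by \cite[Proposition 4.1]{CN} the natural epimorphisms $\phi_i\colon \H_i(\Lam_{\fm}(\H_{\fm}^d(M)))\to \H_i^{\fm}(\H_{\fm}^d(M))$ are isomorphisms for every $i$ because the argument is Artinian. Hence the conclusion of Theorem \ref{2}(iii), namely $\H_i(\Lam_{\fm}(\H_{\fm}^d(M)))\cong\widehat M^{\fm}$ for $i=d$ and $0$ otherwise, is equivalent to the statement $\H_i^{\fm}(\H_{\fm}^d(M))\cong\widehat M^{\fm}$ for $i=d$ and $0$ otherwise, which is precisely statement (ii) of the corollary. (Alternatively one could invoke Theorem \ref{2}(ii) together with \cite[Remark 3.2]{CN} for the forward direction, but routing everything through part (iii) keeps the equivalence symmetric.)

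Finally I would assemble the equivalence: Theorem \ref{2}(iii) already gives ``finite $\fm$-relative Cohen-Macaulay $\iff$ $\H_\bullet(\Lam_{\fm}(\H_{\fm}^d(M)))$ has the displayed form,'' and the two translations above turn the left side into ``$M$ is Cohen-Macaulay'' and the right side into statement (ii) of the corollary, completing the proof. I do not anticipate a genuine obstacle here; the only point requiring a little care is to make sure the Artinianness hypothesis of \cite[Proposition 4.1]{CN} is being applied to $\H_{\fm}^d(M)$ (which is Artinian) and not to $M$ itself (which need not be), so that the identification $\H_i(\Lam_{\fm}(-))\cong\H_i^{\fm}(-)$ is legitimate on the relevant module. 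Once that is noted, the corollary is a formal consequence of Theorem \ref{2} and the dictionary between $\fm$-relative and ordinary Cohen-Macaulayness.
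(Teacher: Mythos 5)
Your proposal is correct and is essentially the paper's own argument: the paper also deduces the corollary immediately from Theorem \ref{2}(iii) with $\fa=\fm$, using that $\cd(\fm,M)=\dim M$ identifies $\fm$-relative Cohen--Macaulayness with ordinary Cohen--Macaulayness, and that $\H_{\fm}^d(M)$ is Artinian so \cite[Proposition 4.1]{CN} identifies $\H_i^{\fm}(-)$ with $\H_i(\Lam_{\fm}(-))$ on it. Your cautionary remark about applying the Artinianness hypothesis to $\H_{\fm}^d(M)$ rather than to $M$ is exactly the point the paper makes in the sentence preceding the corollary.
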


Notice that the local homology and cohomology functors are $R$-linear. So, the following result follows from Theorem \ref{2}.
\begin{cor} Let $\fa$ be an ideal of $R$ and $M$ be a finitely generated $\fa$-relative Cohen-Macaulay $R$-module with $\cd(\fa,M)=c$. Then
\emph{$\Ann_R(\H_{\fa}^c(M))=\Ann_R(M).$}
\end{cor}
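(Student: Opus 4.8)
The plan is to establish the two inclusions $\Ann_R(M)\subseteq\Ann_R(\H_{\fa}^c(M))$ and $\Ann_R(\H_{\fa}^c(M))\subseteq\Ann_R(M)$ separately. The first is purely formal: if $r\in\Ann_R(M)$, then multiplication by $r$ is the zero endomorphism of $M$, and applying the additive $R$-linear functor $\H_{\fa}^c(-)$ shows that multiplication by $r$ is the zero endomorphism of $\H_{\fa}^c(M)$, i.e.\ $r\in\Ann_R(\H_{\fa}^c(M))$. This step uses nothing about relative Cohen--Macaulayness.

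For the reverse inclusion I would invoke Theorem \ref{2}(ii): since $M$ is finite $\fa$-relative Cohen--Macaulay with $\cd(\fa,M)=c$, there is an isomorphism $\H^{\fa}_c(\H_{\fa}^c(M))\cong\widehat{M}^{\fa}$, where $\H^{\fa}_c(-)=\varprojlim\Tor^R_c(R/\fa^n,-)$ is again additive and $R$-linear. Hence any $r\in\Ann_R(\H_{\fa}^c(M))$ kills $\H^{\fa}_c(\H_{\fa}^c(M))$ and therefore kills $\widehat{M}^{\fa}$, so $\Ann_R(\H_{\fa}^c(M))\subseteq\Ann_R(\widehat{M}^{\fa})$. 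It then remains to identify $\Ann_R(\widehat{M}^{\fa})$ with $\Ann_R(M)$: because $M$ is finitely generated one has $\widehat{M}^{\fa}\cong M\otimes_R\widehat{R}^{\fa}$ with $\widehat{R}^{\fa}$ flat over $R$, the inclusion $\Ann_R(M)\subseteq\Ann_R(\widehat{M}^{\fa})$ is clear, and conversely $r\widehat{M}^{\fa}=0$ gives $(rM)\otimes_R\widehat{R}^{\fa}=0$, whence $rM=0$. Chaining the three steps, $\Ann_R(M)\subseteq\Ann_R(\H_{\fa}^c(M))\subseteq\Ann_R(\widehat{M}^{\fa})=\Ann_R(M)$, so equality holds throughout.

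The delicate point is the last identification $\Ann_R(\widehat{M}^{\fa})=\Ann_R(M)$: base change along $R\to\widehat{R}^{\fa}$ detects the annihilator of a finitely generated module precisely because $\widehat{R}^{\fa}$ is faithfully flat over $R$, which holds once $\fa$ is contained in the Jacobson radical of $R$ — in particular in the local setting underlying the rest of the paper — using $\widehat{R}^{\fa}/\fa\widehat{R}^{\fa}\cong R/\fa$ to see that no maximal ideal of $R$ (all of which then contain $\fa$) expands to the unit ideal of $\widehat{R}^{\fa}$. Everything else in the argument is a formal consequence of the $R$-linearity of the local homology and local cohomology functors together with Theorem \ref{2}(ii).
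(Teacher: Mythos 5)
Your argument is essentially the paper's own: the proof given there consists of the remark that the local homology and cohomology functors are $R$-linear together with Theorem \ref{2}, and your three steps ($R$-linearity for $\Ann_R(M)\subseteq\Ann_R(\H_{\fa}^c(M))$, Theorem \ref{2}(ii) plus $R$-linearity for $\Ann_R(\H_{\fa}^c(M))\subseteq\Ann_R(\widehat{M}^{\fa})$, and finally $\Ann_R(\widehat{M}^{\fa})=\Ann_R(M)$) are exactly how that remark unwinds. The caveat you attach to the last step is not a defect of your write-up but a genuine restriction on the statement itself: for finitely generated $M$ one needs $M\to\widehat{M}^{\fa}$ to be injective, equivalently $\bigcap_n\fa^nM=0$, which Krull's intersection theorem gives when $\fa\subseteq\mathrm{Jac}(R)$ (in particular in the local setting), and which can fail otherwise. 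Indeed, as printed for an arbitrary ideal of an arbitrary Noetherian ring the corollary is false: take $R=k\times k$, $\fa=k\times 0$, $M=R$; then $\gr(\fa,M)=\cd(\fa,M)=0$, so $M$ is finite $\fa$-relative Cohen-Macaulay with $c=0$, while $\H_{\fa}^0(M)=\Gamma_{\fa}(R)=0\times k$ has annihilator $\fa\neq 0=\Ann_R(M)$ (consistently with Theorem \ref{2}, since here $\widehat{M}^{\fa}\cong 0\times k$ as well). So your proof is complete precisely in the setting where the corollary is true ($\fa$ contained in the Jacobson radical, e.g.\ $R$ local); note also that in that setting you can bypass faithful flatness in the final step: if $r\widehat{M}^{\fa}=0$ then, since $\widehat{M}^{\fa}$ surjects onto $M/\fa^nM$ for every $n$, one gets $rM\subseteq\bigcap_n\fa^nM=0$ directly.
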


The following lemma has an essential role in the proof of the next theorem, which is one of the main results in the present paper.
\begin{lem}\label{Lem2}Let $M$ is a finite $\fa$-relative sequentially Cohen-Macaulay module with the filtration $0=M_0 \subset M_1 \subset\dots \subset M_{r-1}\subset M_{r}=M$ and set $\cd(\fa,M_i /M_{i-1}):=c_i$. Set \emph{$\T^{i}(-):=\H^{\fa}_i(-)$} or \emph{$\T^{i}(-):=\H_{i}(\Lam_{\fa}(-))$}. Then
\begin{itemize}
\item[(i)]{{\emph{$\H_{\fa}^i(M)=0$} for all $i\notin\{c_1,\dots c_r\}.$}}
\item[(ii)]{{\emph{$\T^j(\H_{\fa}^{i}(M))=0$} for all $j\neq i$ where $i\in\{c_1,\dots, c_r\}.$}}
\item[(iii)]{{\emph{$\T^{c_i}(\H_{\fa}^{c_i}(M))$} is $\fa$-relative Cohen-Macaulay of cohomological dimension $c_i$ for all $i=1,\dots, r.$}}
\end{itemize}
\begin{proof}
We use induction on $r$ to prove the result. To do this, let  $r=1$, then by our assumption $M$ is finite $\fa$-relative Cohen-Macaulay with $\cd(\fa,M)=c_1$. So, by Theorem \ref{2}, one has $\T^j(\H_{\fa}^{c_1}(M))=0$ for all $j\neq c_1$ and $\T^{c_1}(\H_{\fa}^{c_1}(M))\cong\widehat{M}^{\fa}$ which implies that is an $\fa$-relative Cohen-Macaulay module with cohomological dimension $c_1$. Now, assume that $r>1$ and the result has been proved for all finite $\fa$-relative sequentially Cohen-Macaulay modules with a finite filtration of length $r-1$. Notice that $M/M_1$ is a sequentially $R$-module with the following finite filtration of length $r-1$:
$$0=M_1/M_1 \subset M_2/M_1 \subset\dots \subset M_{r}/M_1=M/M_1.$$ Therefore, by induction hypothesis we have $\H_{\fa}^i(M/M_1)=0$ for all $i\notin\{c_2,\dots c_r\},$ $\T^j(\H_{\fa}^{i}(M/M_1))=0$ for all $j\neq i$ where $i\in\{c_2,\dots, c_r\}$ and $\T^{c_i}(\H_{\fa}^{c_i}(M/M_1))$ is $\fa$-relative Cohen-Macaulay of cohomological dimension $c_i$ for all $i=2,\dots, r.$
Now, consider the short exact sequence $0 \longrightarrow M_1 \longrightarrow M \longrightarrow M/M_1 \longrightarrow0$ and the induced long exact sequence
\[\begin{array}{rl}
\cdots\longrightarrow\H_{\fa}^i(M_1)\longrightarrow\H_{\fa}^i(M)\longrightarrow \H_{\fa}^i(M/M_1) \longrightarrow\H_{\fa}^{i+1}(M_1)\longrightarrow\H_{\fa}^{i+1}(M)\longrightarrow\cdots
\end{array}\]
Therefore, by the above induced sequence one has the isomorphism $\H_{\fa}^i(M)\cong\H_{\fa}^i(M/M_1)$ for $i\neq c_1 , c_{1}-1$ and the following exact sequence
\[\begin{array}{rl}
0\longrightarrow\H_{\fa}^{c_1 -1}(M)\longrightarrow\H_{\fa}^{c_1 -1}(M/M_1)\longrightarrow \H_{\fa}^{c_1}(M_1) \longrightarrow\H_{\fa}^{c_1}(M)\longrightarrow\H_{\fa}^{c_1}(M/M_1)\longrightarrow 0.
\end{array}\]
As $\H_{\fa}^{i}(M/M_1)=0$ for $i=c_1, c_1-1$, one has  $\H_{\fa}^{c_1}(M_1)\cong\H_{\fa}^{c_1}(M)$ and $\H_{\fa}^{c_1 -1}(M)=0$. Therefore, using the hypothesis assumption and Theorem \ref{2} for the $R$-module $M_1$ completes the proof.
\end{proof}
\end{lem}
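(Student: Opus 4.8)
The plan is to argue by induction on the length $r$ of the given filtration, feeding the base case from Theorem \ref{2} and propagating the conclusions through the long exact sequence of local cohomology attached to the short exact sequence $0\longrightarrow M_1\longrightarrow M\longrightarrow M/M_1\longrightarrow 0$. Throughout, $\T$ is kept fixed as one of the two functors $\H^{\fa}_\bullet(-)$ or $\H_\bullet(\Lam_{\fa}(-))$; the point of running the induction this way is that Theorem \ref{2} already supplies the needed facts for both flavours simultaneously.

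For $r=1$ the module $M=M_1$ is finite $\fa$-relative Cohen-Macaulay with $\cd(\fa,M)=c_1$, so $(i)$ is immediate, while Theorem \ref{2} gives $\T^j(\H_{\fa}^{c_1}(M))=0$ for $j\neq c_1$ together with $\T^{c_1}(\H_{\fa}^{c_1}(M))\cong\widehat{M}^{\fa}$. Since $\widehat{M}^{\fa}\cong M\otimes_R\widehat{R}^{\fa}$, the Flat Base Change and Independence theorems (exactly as at the end of the proof of Theorem \ref{2}$(iii)$) give $\H_{\fa}^i(\widehat{M}^{\fa})\cong\H_{\fa}^i(M)$, so $\widehat{M}^{\fa}$ is $\fa$-relative Cohen-Macaulay of cohomological dimension $c_1$, which is $(ii)$ and $(iii)$. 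For the inductive step $r>1$, I would apply the inductive hypothesis to $M/M_1$, which carries the filtration $0=M_1/M_1\subset M_2/M_1\subset\dots\subset M_r/M_1=M/M_1$ of length $r-1$ with $\cd\big(\fa,(M_i/M_1)/(M_{i-1}/M_1)\big)=\cd(\fa,M_i/M_{i-1})=c_i$ for $i\geq 2$, and note at the same time that $M_1=M_1/M_0$ is itself finite $\fa$-relative Cohen-Macaulay of cohomological dimension $c_1$, so Theorem \ref{2} applies to $M_1$ directly.

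The heart of the argument is then to transfer information across the long exact sequence
$$\cdots\longrightarrow\H_{\fa}^i(M_1)\longrightarrow\H_{\fa}^i(M)\longrightarrow\H_{\fa}^i(M/M_1)\longrightarrow\H_{\fa}^{i+1}(M_1)\longrightarrow\cdots.$$
Because $\H_{\fa}^i(M_1)=0$ for $i\neq c_1$ and, thanks to the strict inequalities $c_1<c_2\leq c_j$ for all $j\geq 2$, also $\H_{\fa}^{c_1}(M/M_1)=\H_{\fa}^{c_1-1}(M/M_1)=0$, the sequence degenerates into isomorphisms $\H_{\fa}^i(M)\cong\H_{\fa}^i(M/M_1)$ for $i\neq c_1,c_1-1$, the vanishing $\H_{\fa}^{c_1-1}(M)=0$, and an isomorphism $\H_{\fa}^{c_1}(M)\cong\H_{\fa}^{c_1}(M_1)$. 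Together with $\H_{\fa}^i(M/M_1)=0$ for $i\notin\{c_2,\dots,c_r\}$ this already yields $(i)$; and then applying $\T^\bullet$ to the isomorphisms $\H_{\fa}^{c_1}(M)\cong\H_{\fa}^{c_1}(M_1)$ and $\H_{\fa}^{c_i}(M)\cong\H_{\fa}^{c_i}(M/M_1)$ for $i\geq 2$, and invoking the inductive hypothesis for $M/M_1$ and Theorem \ref{2} for $M_1$, gives $(ii)$ and $(iii)$.

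I do not anticipate a genuine obstacle: the only thing that really needs attention is the elementary bookkeeping that the chain $c_1<c_2<\dots<c_r$ places both $c_1$ and $c_1-1$ strictly below $c_2=\min\{c_2,\dots,c_r\}$, so that the degrees where $\H_{\fa}^\bullet(M_1)$ lives and the degrees where $\H_{\fa}^\bullet(M/M_1)$ lives are disjoint and the long exact sequence breaks up cleanly. Once this is in place the proof is a routine induction requiring nothing beyond Theorem \ref{2} and the elementary behaviour of local cohomology on short exact sequences.
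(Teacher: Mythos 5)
Your proposal is correct and follows essentially the same route as the paper: induction on the filtration length, the base case handled by Theorem \ref{2}, and the inductive step by breaking the local cohomology long exact sequence of $0\to M_1\to M\to M/M_1\to 0$ using $\H_{\fa}^i(M_1)=0$ for $i\neq c_1$ and the vanishing of $\H_{\fa}^{c_1}(M/M_1)$ and $\H_{\fa}^{c_1-1}(M/M_1)$ forced by $c_1<c_2$. Your extra justification via flat base change that $\widehat{M}^{\fa}$ is $\fa$-relative Cohen-Macaulay of cohomological dimension $c_1$ only makes explicit a step the paper leaves implicit.
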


\begin{thm}\label{fff} Let $\fa$ be an ideal of $R$ and $M$ be a finitely generated $R$-module and set \emph{$\T^{i}(-):=\H^{\fa}_i(-)$} or \emph{$\T^{i}(-):=\H_{i}(\Lam_{\fa}(-))$}. Then, the following statements hold:
\begin{itemize}
\item[(i)]{{If $M$ is a finite $\fa$-relative sequentially Cohen-Macaulay, then \emph{$\T^j(\H_{\fa}^i(M))=0$} for all $0\leq i\leq \cd(\fa,M)$ and for all $j\neq i$ and \emph{$\T^i(\H_{\fa}^i(M))$} is zero or $\fa$-relative Cohen-Macaulay of cohomological dimension $i$.}}
\item[(ii)]{{ If \emph{$\H_{j}(\Lam_{\fa}(\H_{\fa}^i(M)))=0$} for all $0\leq i\leq \cd(\fa,M)$ and for all $j\neq i$, then $\widehat{M}^{\fa}$ is a finite $\fa$-relative sequentially Cohen-Macaulay as an $\widehat R^{\fa}$-module.}}
\end{itemize}
\begin{proof}
The statement $(i)$ immediately follows from Lemma \ref{Lem2}. For the statement $(ii)$, we first prove that it will be enough to show the claim for $M =\widehat M^{\fa}$ the
$\fa$-adic completion of $M$ over the completed ring $\widehat {R}^{\fa}$. This follows because, in view of \cite[Corollary 9.8.2]{SC2}, one has $\H_{j}(\Lam_{\fa}(\H_{\fa}^i(M)))\cong\H_{j}(\Lam_{\fa}(\H_{\fa}^i(M)\otimes_R\widehat{R}^{\fa}))\cong\H_{j}(\Lam_{\fa\widehat{R}^{\fa}}(\H_{\fa{\widehat {R}}^{\fa}}^i(\widehat{M}^{\fa})))$. Now, consider $\check{L}_x$ and $E^{\bullet}(M)$ as a bounded free resolution of the Cech complex $\check{C}_x$ and a deleted injective resolution for $M$, respectively. Let
$$\check{L}_x:= 0\longrightarrow \check{L}_{x}^0\longrightarrow \check{L}_{x}^1\longrightarrow\dots \check{L}_{x}^{k-1}\longrightarrow\check{L}_{x}^k\longrightarrow 0,$$ and
  $$\Gamma_{\fa}(E_{\bullet}(M)):= \dots\longleftarrow \Gamma_{\fa}(E_{-t})\longleftarrow\dots \Gamma_{\fa}(E_{-1})\longleftarrow\Gamma_{\fa}(E_{0})\longleftarrow 0,$$
where $k$ is the number of generators of $\fa$ and for all $i\geq 0$, $\Gamma_{\fa}(E_{-i})=\Gamma_{\fa}(E^{i})$. Now, consider the forth quadrant double complex $\mathcal {M}=(M_{p,-q}={\Hom_R(\check{L}_{x}^p,\Gamma_{\fa}(E_{-q}))})$ for all $q\geq 0$ and $0\leq p\leq k$ with the total complex $(\Tot(M)_n=\underset{p-q=n, q\geq 0 , 0\leq p\leq k} \bigoplus M_{p,-q})_{n\leq k}$. Next, consider the first filtration defined by $^{I}F^p\Tot(\mathcal{M}):=\underset{i\leq p}\bigoplus M_{i,i-n}$ and note that since over line $p+q=n$ there are finitely non zero terms of double complex, then this filtration is bounded for all $n$.  Now, with the notation of \cite{JR}, $\E^1$ is the bigraded module whose $(p,-q)$ term is $\H^{''}_{-q}(M_{p,*})$, the -q-th homology of the p-th column. Therefore, $\H^{''}_{-q}(M_{p,*})=\Hom_R({L}_{x}^p,H^{q}_{\fa}(M))$, and so $^{I}E^2_{p,-q}=\H^{'}_p(\H^{''}_{-q}(\mathcal{M}))=\H_{p}(\Lam_{\fa}(\H^{q}_{\fa}(M))$. Now, as only a finite number of rows $M_{\ast,-q}$ are non-zero, then
the first spectral sequence converges, that is,
$$^{I}E^2_{p, -q} = \H_{p}(\Lam_{\fa}(H^{q}_{\fa}(M))\Longrightarrow \H_{p-q}(\Tot(\mathcal M))$$

Therefore, for all $q\geq 0$ and $0\leq p\leq k$ there is the following filtration in the stage $p-q=0$ $${0}=\Phi_{-1}\H^{0}\subseteq\Phi_{0}\H^{0}\subseteq\ldots\subseteq\Phi_{k-1}\H^{0}\subseteq\Phi_{k}H^{0}=\H^{0}(\Tot(M))$$
such that $^{I}E^{\infty}_{p,-q}\cong\Phi_{p}\H^{0}/\Phi_{p-1}\H^{0}$.

Also, by our assumption we have $^{I}E^{\infty}_{p,-q}=0$ for all $p\neq q$ and $^{I}E^{2}_{p,-p}\cong {^{I}E^{\infty}}_{p,-p}=\Phi_{p}\H^{0}/\Phi_{p-1}\H^{0}$ for all $p$. Therefore, by our assumption $\Phi_{p}\H^{0}/\Phi_{p-1}\H^{0}$ is zero or $\fa$-relative Cohen-Macaulay $R$-module of cohomological dimension $p$ and so $\H^{0}(\Tot(M))$ is $\fa$-relative sequentially Cohen-Macaulay. On the other hand, first notice that $\Supp_R(\check{L}_x)\subseteq \V(\fa)$ and so by \cite[Lemma 2.3]{SC1} one has the $R$-complex isomorphism $\Hom_R(\check{L}_x, \Gamma_{\fa}(E^{\bullet}(M)))\simeq\Hom_R(\check{L}_x, E^{\bullet}(M))$. Also, as $E^{\bullet}(M)$ is an injective resolution for $M$, there exists the quasi isomorphism $M\stackrel{\sim}\longrightarrow E^{\bullet}(M)$, and hence we have the quasi isomorphism $\Hom_R(\check{L}_x, M)\stackrel{\sim}\longrightarrow\Hom_R(\check{L}_x, E^{\bullet}(M))$. Therefore, one can use the following isomorphisms

\[\begin{array}{rl}
\H_{0}(\Tot(\mathcal M))&\cong H^{0}(\Hom_R(\check{L}_x, \Gamma_{\fa}(E^{\bullet}(M))))\\
&\cong \H_{0}(\Hom_R(\check{L}_x,M)\\
&\cong\H_0(\Lam_{\fa}(M))\\
&\cong\widehat{M}^{\fa},
\end{array}\]
to deduce that $\widehat M^{\fa}$ is $\fa$-relative sequentially Cohen-Macaulay, as required.

\end{proof}
\end{thm}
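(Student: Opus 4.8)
\emph{Part (i).} I would deduce this directly from Lemma~\ref{Lem2}. Fixing a filtration $0=M_0\subset M_1\subset\dots\subset M_r=M$ that witnesses the $\fa$-relative sequentially Cohen-Macaulay property and writing $c_i=\cd(\fa,M_i/M_{i-1})$, the plan is an induction on $r$: the case $r=1$ is Theorem~\ref{2}, and for $r>1$ one applies $\H_{\fa}^{\bullet}(-)$ to $0\to M_1\to M\to M/M_1\to 0$. Since $M_1$ is $\fa$-relative Cohen-Macaulay concentrated in cohomological degree $c_1$ and $c_1<c_2<\dots<c_r$, the associated long exact sequence forces $\H_{\fa}^{c_1}(M)\cong\H_{\fa}^{c_1}(M_1)$, $\H_{\fa}^{c_1-1}(M)=0$, and $\H_{\fa}^{i}(M)\cong\H_{\fa}^{i}(M/M_1)$ for $i\ne c_1$; then Theorem~\ref{2} applied to $M_1$ together with the induction hypothesis applied to $M/M_1$ finishes it. In particular $\H_{\fa}^{i}(M)=0$ for $i\notin\{c_1,\dots,c_r\}$, so $\T^{i}(\H_{\fa}^{i}(M))=0$ for such $i$, while $\T^{c_j}(\H_{\fa}^{c_j}(M))$ is $\fa$-relative Cohen-Macaulay of cohomological dimension $c_j$ for each $j$, which is precisely assertion~(i).

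\emph{Part (ii): reduction to the complete case.} First I would reduce to the situation where $R$ is $\fa$-adically complete and $M=\widehat M^{\fa}$. Since $\H_{\fa}^{i}(M)$ is $\fa$-torsion it carries a natural $\widehat R^{\fa}$-module structure, so $\H_{\fa}^{i}(M)\cong\H_{\fa}^{i}(M)\otimes_R\widehat R^{\fa}$, and flat base change gives $\H_{\fa\widehat R^{\fa}}^{i}(\widehat M^{\fa})\cong\H_{\fa}^{i}(M)\otimes_R\widehat R^{\fa}$; combining this with \cite[Corollary~9.8.2]{SC2} yields $\H_j(\Lam_{\fa}(\H_{\fa}^{i}(M)))\cong\H_j(\Lam_{\fa\widehat R^{\fa}}(\H_{\fa\widehat R^{\fa}}^{i}(\widehat M^{\fa})))$. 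As both the hypothesis and the conclusion concern $\widehat M^{\fa}$ as an $\widehat R^{\fa}$-module, we may replace $R$ by $\widehat R^{\fa}$ and $M$ by $\widehat M^{\fa}$; the new $M$ is then $\fa$-adically complete, so $M=\widehat M^{\fa}$.

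\emph{Part (ii): the spectral sequence.} Next I would run a double-complex argument. Let $\check L_{\underline x}$ be a bounded free resolution of the \v{C}ech complex $\check C_{\underline x}$, so that $\H_p(\Lam_{\fa}(N))=\H_p(\Hom_R(\check L_{\underline x},N))$ for every $N$, and let $E^{\bullet}(M)$ be a deleted injective resolution of $M$; writing $\Gamma_{\fa}(E_{-q}):=\Gamma_{\fa}(E^{q})$, form the fourth-quadrant double complex $\mathcal M=(M_{p,-q})$ with $M_{p,-q}=\Hom_R(\check L_{\underline x}^{\,p},\Gamma_{\fa}(E_{-q}))$ for $0\le p\le k$ and $q\ge 0$. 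Since $\mathcal M$ is concentrated in finitely many columns, the spectral sequence obtained by first taking column homology converges; its $E^1$-term at $(p,-q)$ is $\Hom_R(\check L_{\underline x}^{\,p},\H_{\fa}^{q}(M))$, hence
\[{}^{I}E^2_{p,-q}=\H_p\bigl(\Lam_{\fa}(\H_{\fa}^{q}(M))\bigr)\ \Longrightarrow\ \H_{p-q}(\Tot(\mathcal M)),\]
and this page is nonzero only for $0\le p\le k$ and $0\le q\le\cd(\fa,M)$. By hypothesis ${}^{I}E^2_{p,-q}=0$ whenever $p\ne q$, so every higher differential meeting the antidiagonal $p-q=0$ vanishes; thus ${}^{I}E^2_{p,-p}\cong{}^{I}E^{\infty}_{p,-p}$ and $\H_0(\Tot(\mathcal M))$ acquires a finite filtration whose $p$-th subquotient is $\H_p(\Lam_{\fa}(\H_{\fa}^{p}(M)))$.

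\emph{Part (ii): conclusion, and the main obstacle.} To finish, if $\H_{\fa}^{p}(M)=0$ the $p$-th subquotient is zero, and otherwise Lemma~\ref{000}, applied with $c=p$ and using the hypothesis $\H_j(\Lam_{\fa}(\H_{\fa}^{p}(M)))=0$ for $j\ne p$, shows $\H_p(\Lam_{\fa}(\H_{\fa}^{p}(M)))$ is a nonzero $\fa$-relative Cohen-Macaulay module of cohomological dimension exactly $p$; deleting the zero steps then produces an $\fa$-relative Cohen-Macaulay filtration of $\H_0(\Tot(\mathcal M))$ with strictly increasing cohomological dimensions. Finally, because $\Supp_R(\check L_{\underline x})\subseteq\V(\fa)$, the quasi-isomorphisms $\Hom_R(\check L_{\underline x},\Gamma_{\fa}(E^{\bullet}(M)))\simeq\Hom_R(\check L_{\underline x},E^{\bullet}(M))\simeq\Hom_R(\check L_{\underline x},M)$ give $\H_0(\Tot(\mathcal M))\cong\H_0(\Lam_{\fa}(M))\cong\Lambda_{\fa}(M)=\widehat M^{\fa}$, so $\widehat M^{\fa}$ is $\fa$-relative sequentially Cohen-Macaulay. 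I expect the main difficulty to lie in setting up this double complex carefully enough to justify convergence and the degeneration along $p-q=0$, and in correctly matching the $E^{\infty}$-subquotients with $\H_p(\Lam_{\fa}(\H_{\fa}^{p}(M)))$ through Lemma~\ref{000}; the identification of $\H_0(\Tot(\mathcal M))$ with $\widehat M^{\fa}$ is then routine.
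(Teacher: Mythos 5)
Your proposal is correct and follows essentially the same route as the paper: part (i) via Lemma \ref{Lem2}, and part (ii) via the reduction to the $\fa$-adically complete case, the fourth-quadrant double complex $\Hom_R(\check{L}_{\underline{x}}^{\,p},\Gamma_{\fa}(E_{-q}))$ whose first spectral sequence degenerates on the diagonal $p=q$, and the identification $\H_0(\Tot(\mathcal M))\cong\widehat{M}^{\fa}$. Your explicit appeal to Lemma \ref{000} to see that the diagonal subquotients are $\fa$-relative Cohen-Macaulay of cohomological dimension exactly $p$ (and your remark about discarding the zero steps to get strictly increasing cohomological dimensions) is exactly the justification the paper intends where it writes ``by our assumption,'' so this is a slight tightening rather than a different argument.
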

Here we should notice that if $c$ is a fix integer such that $\H_{i}(\Lam_{\fa}(\H_{\fa}^c(M)))=0$ for all $i$, then in view of \cite[Corollary 3.4.2]{SC2} one has $\Tor^R_{i}(R/\fa,\H_{\fa}^c(M))=0$ for all $i$. Therefore, by \cite[Theorem 3.2.6]{SC2} we have that $\Ext_R^{i}(R/\fa,\H_{\fa}^c(M))=0$ for all $i$, which implies that $\H_{\fa}^i(\H_{\fa}^c(M))=0$ for all $i$. Hence $\H_{\fa}^c(M)=0$. So, in the statement $(i)$ of the previous theorem, there exists at least one integer $i$ such that $\T^i(\H_{\fa}^i(M))$ is nonzero.

The following result is an immediate consequence of Theorem \ref{fff} for the case where $\fa=\fm$.
\begin{cor}\label{*1}Let $(R,\fm)$ be a local ring and $M$ be a finitely generated $R$-module. Then, the following statements hold:
\begin{itemize}
\item[(i)]{{If $M$ is sequentially Cohen-Macaulay, then \emph{$\H_{j}(\Lam_{\fm}(\H_{\fm}^t(M)))=0$} for all $0\leq t\leq \dim M$ and for all $j\neq t$.}}
\item[(ii)]{{ If \emph{$\H_{j}(\Lam_{\fm}(\H_{\fm}^t(M)))=0$} for all $0\leq t\leq \dim M$ and for all $j\neq t$, then $\widehat M$ is a sequentially Cohen-Macaulay $\widehat R$-module.}}
\end{itemize}
\end{cor}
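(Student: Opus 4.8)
The plan is to deduce Corollary \ref{*1} directly from Theorem \ref{fff} by specializing the ideal $\fa$ to the maximal ideal $\fm$. The first step is to record the translation of notation in this case: as observed following Definition \ref{*33}, for a finitely generated module $M$ over the local ring $(R,\fm)$ one has $\cd(\fm, M) = \dim_R M$, the notion of $\fm$-relative Cohen-Macaulay coincides with the ordinary notion of Cohen-Macaulay, and hence $\fm$-relative sequentially Cohen-Macaulay coincides with sequentially Cohen-Macaulay. Moreover $\widehat{R}^{\fm}$ and $\widehat{M}^{\fm}$ are just the usual $\fm$-adic completions $\widehat{R}$ and $\widehat{M}$.

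For part (i), I would apply Theorem \ref{fff}(i) with $\T^{j}(-) := \H_{j}(\Lam_{\fm}(-))$. Since $M$ is sequentially Cohen-Macaulay, it is a finite $\fm$-relative sequentially Cohen-Macaulay module, so Theorem \ref{fff}(i) immediately yields $\H_{j}(\Lam_{\fm}(\H_{\fm}^{t}(M))) = 0$ for all $0 \le t \le \cd(\fm, M) = \dim M$ and all $j \ne t$, which is exactly the asserted vanishing.

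For part (ii), the hypothesis is precisely the hypothesis of Theorem \ref{fff}(ii) with $\fa = \fm$, so that theorem gives that $\widehat{M}^{\fm}$ is a finite $\fm$-relative sequentially Cohen-Macaulay module over $\widehat{R}^{\fm}$. It then remains to note that $\fm\widehat{R}$ is the maximal ideal of the complete local ring $\widehat{R}$, so that $\cd(\fm\widehat{R}, -) = \dim_{\widehat{R}}(-)$ and being $\fm\widehat{R}$-relative sequentially Cohen-Macaulay over $\widehat{R}$ is the same as being sequentially Cohen-Macaulay over $\widehat{R}$; hence $\widehat{M}$ is a sequentially Cohen-Macaulay $\widehat{R}$-module.

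Since every step is a transcription of facts already established in the excerpt, I do not expect any genuine obstacle. The only point requiring a moment of care is the bookkeeping that $\H_{\fm}^{t}(M) = 0$ for $t$ outside $[0,\dim M]$ (Grothendieck vanishing together with $\depth$ at the bottom), so that restricting the range of $t$ to $0 \le t \le \dim M$ in both directions loses no information, and that the formation of $\H_{\fm}^{t}(-)$ and $\H_{j}(\Lam_{\fm}(-))$ is compatible with base change to $\widehat{R}$, which is the content invoked through Theorem \ref{fff}.
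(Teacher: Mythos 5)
Your proposal is correct and is exactly the paper's argument: the corollary is stated there as an immediate consequence of Theorem \ref{fff} with $\fa=\fm$, using the same identifications $\cd(\fm,M)=\dim M$, $\fm$-relative (sequentially) Cohen-Macaulay $=$ (sequentially) Cohen-Macaulay, and $\widehat{M}^{\fm}=\widehat{M}$. Nothing further is needed.
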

\begin{defrem}\emph{\label{**}Let $(R,\fm)$ be a local ring of dimension $d$ and $M$ be a finitely generated $R$-module with $n = \dim_R M$. For an integer $i\in\mathbb{Z}$, define $K_M^i:= \H^{i}(\Hom_R(M,D_R^{\bullet})$
where $D_R^{\bullet}$ is a normalaized dualizing complex for $R$.
The module $K_M:= K_{M}^n$ is called the canonical module of $M$. For $i\neq n$ the modules
$K_M^i$ are called the modules of deficiency of $M$. By the local duality theorem, for all $i$ there are the following canonical isomorphisms:
$$\H_{\fm}^i(M)\cong\Hom_R(K_M^i, \E_R(R/ \fm)).$$
Recall that all of the $K_M^i$, $i\in\mathbb{Z}$, are finitely generated $R$-modules. Moreover $M$ is a Cohen-Macaulay
module if and only if $K_M^i = 0$ for all $i\neq n$. Whence the modules of deficiencies of $M$
measure the deviation of $M$ from being a Cohen-Macaulay module. In the case of $(R,\fm)$
the quotient of a local Gorenstein ring $(S, \fn)$ there are the following isomorphisms
$K_M^i=\Ext^{s-i}_{S}(M, S), s = \dim S$, for all $i\in\mathbb{Z}$. Therefore, one has $K_{\widehat M}^i\cong\Hom_R(\H^i_{\fm}(M), \E_R(R/ \fm))\cong\Hom_{\widehat R}(\H^i_{\fm\widehat R}(\widehat M), \E_{\widehat R}(\widehat R/ \fm\widehat R))$ and is finitely generated $\widehat R$-module for all $i$.}
\end{defrem}
\begin{prop}\label{*}Let $(R,\fm)$ be a local ring with a dualizing complex and $M$ be a finitely generated $R$-module. Then, for any integer $t$, the following statements are equivalent:
\begin{itemize}
\item[(i)]{{$K_M^t$ is zero or a Cohen-Macaulay $R$-module of dimension $t$.}}
\item[(ii)]{{ \emph{$\H_{j}(\Lam_{\fm}(\H_{\fm}^t(M)))=0$} for all $j\neq t$.}}
\end{itemize}
\begin{proof} First, consider the following isomorphisms:
\[\begin{array}{rl}
\H_{j}(\Lam_{\fm}(\H_{\fm}^t(M)))&\cong\H_j(\Hom_R(\check{L}_x, \H_{\fm}^t(M))\\
&\cong \H_j(\Hom_R(\check{L}_x, \Hom_R(K_M^t, \E_R(R/ \fm))))\\
&\cong\H_j(\Hom_R(\check{L}_x\otimes_R K_M^t, \E_R(R/ \fm))\\
&\cong\Hom_R(\H^j(\check{L}_x\otimes_R K_M^t), \E_R(R/ \fm))\\
&\cong\Hom_R(\H^j_{\fm}(K_M^t), \E_R(R/ \fm)).
\end{array}\]
Here, the last isomorphism follows from \cite[Theorem 7.4.4]{SC2}. Therefore, $K_M^t$ is a Cohen-Macaulay if and only if $\H_{j}(\Lam_{\fm}(\H_{\fm}^t(M)))=0$ for all $j\neq t$, as required.
\end{proof}
\end{prop}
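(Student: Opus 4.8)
The plan is to collapse the left-hand side of the claimed equivalence to the Matlis dual of an ordinary local cohomology module of the finitely generated module $K_M^t$, after which the assertion reduces to a standard fact about finitely generated modules.

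First I would rewrite $\H_{j}(\Lam_{\fm}(\H_{\fm}^t(M)))$. Using the description of the left derived completion by the free \v{C}ech complex recalled in Section~2, for any $R$-module $N$ one has $\H_{j}(\Lam_{\fm}(N)) \cong \H_{j}(\Hom_R(\check{L}_{\underline{x}}, N))$. Applying this with $N = \H_{\fm}^t(M)$ and substituting the local duality isomorphism $\H_{\fm}^t(M) \cong \Hom_R(K_M^t, \E_R(R/\fm))$ from \ref{**}, the complex becomes $\Hom_R(\check{L}_{\underline{x}}, \Hom_R(K_M^t, \E_R(R/\fm)))$. Hom-tensor adjunction --- legitimate since $\check{L}_{\underline{x}}$ is a bounded complex of finite free modules --- converts this into $\Hom_R(\check{L}_{\underline{x}} \otimes_R K_M^t, \E_R(R/\fm))$; and since $\E_R(R/\fm)$ is injective, $\Hom_R(-, \E_R(R/\fm))$ is exact, so homology moves inside to give $\H_{j}(\Lam_{\fm}(\H_{\fm}^t(M))) \cong \Hom_R(\H^{j}(\check{L}_{\underline{x}} \otimes_R K_M^t), \E_R(R/\fm))$. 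Finally, because $\check{L}_{\underline{x}}$ is a free resolution of the \v{C}ech complex computing $\ugamma_{\fm}$ (cf.\ \cite[Theorem 7.4.4]{SC2}), this last module is $(\H^{j}_{\fm}(K_M^t))^{\vee}$.

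With the identification $\H_{j}(\Lam_{\fm}(\H_{\fm}^t(M))) \cong (\H^{j}_{\fm}(K_M^t))^{\vee}$ in hand, the equivalence is immediate. By faithfulness of Matlis duality, condition (ii) says exactly that $\H^{j}_{\fm}(K_M^t) = 0$ for all $j \neq t$. If $K_M^t = 0$ this is automatic; if $K_M^t$ is Cohen--Macaulay of dimension $t$, then $\depth_R K_M^t = \dim_R K_M^t = t$ and Grothendieck's vanishing and non-vanishing theorems give $\H^{j}_{\fm}(K_M^t) = 0$ precisely for $j \neq t$ --- so $(i) \Rightarrow (ii)$. Conversely, if $K_M^t \neq 0$ and $\H^{j}_{\fm}(K_M^t) = 0$ for all $j \neq t$, then, using that $K_M^t$ is finitely generated (recorded in \ref{**}), so that the least and greatest degrees with $\H^{\bullet}_{\fm}(K_M^t) \neq 0$ are $\depth_R K_M^t$ and $\dim_R K_M^t$, both invariants must equal $t$; hence $K_M^t$ is Cohen--Macaulay of dimension $t$ --- so $(ii) \Rightarrow (i)$.

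I do not anticipate a genuine obstacle: the proof is simply an assembly of the \v{C}ech-complex formula for $\Lam_{\fm}$, local duality over a ring admitting a dualizing complex, Hom-tensor adjunction, injectivity of $\E_R(R/\fm)$, faithfulness of Matlis duality, and the elementary depth/dimension reading of local cohomology. The only point requiring care is the index bookkeeping in the first step --- checking that passing to homology commutes with $\Hom_R(-, \E_R(R/\fm))$ at the level of complexes with the correct shift, so that the degree $j$ survives to $\H^{j}_{\fm}(K_M^t)$.
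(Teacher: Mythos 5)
Your proposal is correct and follows essentially the same route as the paper: the identical chain of isomorphisms (Čech-free-complex description of $\Lam_{\fm}$, local duality, Hom-tensor adjunction, exactness of Matlis duality, and \cite[Theorem 7.4.4]{SC2}) reducing $\H_{j}(\Lam_{\fm}(\H_{\fm}^t(M)))$ to $\Hom_R(\H^j_{\fm}(K_M^t),\E_R(R/\fm))$, after which the equivalence follows from faithfulness of Matlis duality and the depth/dimension reading of local cohomology for the finitely generated module $K_M^t$. Your write-up merely makes explicit the final step that the paper leaves implicit.
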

It is well-known that a complete local ring $(R,\fm)$ with respect to $\fm$-adic completion has a dualizing complex. Therefore, we provide the following result as an immediate consequence of Corollary \ref{*1} and Proposition \ref{*}.
\begin{cor}Let $(R,\fm)$ be a complete local ring with respect to $\fm$-adic completion and $M$ be a finitely generated $R$-module. Then, the following statements are equivalent:
\begin{itemize}
\item[(i)]{{ $M$ is a sequentially Cohen-Macaulay $R$-module.}}
\item[(ii)]{{$K_M^t$ is zero or a Cohen-Macaulay $R$-module of dimension $t$ for all $0\leq t\leq \dim M$.}}
\end{itemize}
\end{cor}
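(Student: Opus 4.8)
The plan is to deduce this corollary directly from Corollary \ref{*1} together with Proposition \ref{*}, using the completeness hypothesis at two points. First, since $(R,\fm)$ is complete with respect to the $\fm$-adic topology it admits a dualizing complex, so Proposition \ref{*} is available; and moreover $\widehat{M}=\widehat{M}^{\fm}=M$, which is precisely what allows us to pass from a statement about $\widehat{M}$ back to $M$ itself. Proposition \ref{*} supplies, for each fixed integer $t$, the equivalence: $K_M^t$ is zero or Cohen-Macaulay of dimension $t$ $\iff$ $\H_{j}(\Lam_{\fm}(\H_{\fm}^t(M)))=0$ for all $j\neq t$. So the corollary will follow once we quote this equivalence for every $t$ in the relevant range and glue it to the two implications of Corollary \ref{*1}.

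For $(i)\Rightarrow(ii)$: assuming $M$ is sequentially Cohen-Macaulay, Corollary \ref{*1}(i) gives $\H_{j}(\Lam_{\fm}(\H_{\fm}^t(M)))=0$ for all $0\le t\le\dim M$ and all $j\neq t$. Fixing such a $t$ and applying Proposition \ref{*}, we get that $K_M^t$ is zero or Cohen-Macaulay of dimension $t$, which is the assertion of $(ii)$ for that $t$. (For $t$ outside $\{0,1,\dots,\dim M\}$ one has $\H_{\fm}^t(M)=0$, hence $K_M^t=0$ by the local duality isomorphism recorded in \ref{**}, so those values cause no trouble even though $(ii)$ only concerns $0\le t\le\dim M$.)

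For $(ii)\Rightarrow(i)$: assuming $K_M^t$ is zero or Cohen-Macaulay of dimension $t$ for all $0\le t\le\dim M$, Proposition \ref{*} applied to each such $t$ yields $\H_{j}(\Lam_{\fm}(\H_{\fm}^t(M)))=0$ for all $j\neq t$ in that range. Corollary \ref{*1}(ii) then produces that $\widehat{M}$ is a sequentially Cohen-Macaulay $\widehat{R}$-module; invoking completeness ($R=\widehat{R}$, $M=\widehat{M}$) we conclude that $M$ itself is sequentially Cohen-Macaulay.

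I expect no genuine obstacle here, since the corollary is a formal consequence of the two cited results. The only points deserving a moment's attention are: invoking completeness to guarantee the dualizing complex needed for Proposition \ref{*}, and noting that Proposition \ref{*} is phrased for a single value of $t$, so one must apply it once for each $t\in\{0,1,\dots,\dim M\}$ and separately (and trivially) dispose of the remaining values of $t$ via local duality before assembling the two directions.
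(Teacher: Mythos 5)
Your proposal is correct and follows exactly the paper's route: the paper derives this corollary as an immediate consequence of Corollary \ref{*1} and Proposition \ref{*}, using only the observation that a complete local ring admits a dualizing complex (and, implicitly, that $\widehat{R}=R$ and $\widehat{M}=M$ so the conclusion of Corollary \ref{*1}(ii) applies to $M$ itself). Your extra remark about $t$ outside the range $0\le t\le\dim M$ is harmless but unnecessary, since statement (ii) only concerns that range.
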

In \cite[Corollary 5.4]{SC1}, for a $d$-dimensional finitely generated $R$-module $M$ over the local ring $(R,\fm)$ it is proven that $K_{\widehat M}^d$ is a Cohen-Macaulay $\widehat R$-module if and only if \emph{$\H_{j}(\Lam_{\fm}(\H_{\fm}^d(M)))=0$} for all $j\neq d$. So, the following corollary, which is a consequence of Proposition \ref{*}, is a generalization of \cite[Corollary 5.4]{SC1}.
\begin{cor}Let $(R,\fm)$ be a local ring and $M$ be a finitely generated $R$-module. Then, for any $0\leq t\leq \dim M$ the following statements are equivalent:
\begin{itemize}
\item[(i)]{{$K_{\widehat M}^t$ is zero or a Cohen-Macaulay $\widehat R$-module of dimension $t$.}}
\item[(ii)]{{ \emph{$\H_{j}(\Lam_{\fm}(\H_{\fm}^t(M)))=0$} for all $j\neq t$.}}
\end{itemize}
\begin{proof}First notice that, by \cite[Corollary 2.2.5]{SC2}, $\H_{j}(\Lam_{\fm}(\H_{\fm}^t(M)))\cong \H_{j}(\Lam_{\fm\widehat R}(\H_{\fm\widehat R}^t(\widehat M)))$ for all $j$. Hence we may and do assume that $R$ is a complete local ring, and so it has a dualizing complex. Therefore, the proof is complete by Proposition \ref{*}.
\end{proof}
\end{cor}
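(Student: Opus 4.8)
The plan is to reduce the stated equivalence to the case where $R$ is complete and then to quote Proposition~\ref{*}. The only substantive point of the reduction is that neither side of the equivalence changes when one replaces $R$ by its $\fm$-adic completion $\widehat R$ and $M$ by $\widehat M$.

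For condition (ii) this is a completion-insensitivity statement for $\fm$-adic local homology: by \cite[Corollary 2.2.5]{SC2} one has $\H_{j}(\Lam_{\fm}(\H_{\fm}^t(M)))\cong\H_{j}(\Lam_{\fm\widehat R}(\H_{\fm\widehat R}^t(\widehat M)))$ for every integer $j$ (the point being that $\H_{\fm}^t(M)$ is already an $\widehat R$-module, being Artinian, and that $\H_{\fm\widehat R}^t(\widehat M)\cong\H_{\fm}^t(M)$ by flat base change), so (ii) holds for $(R,M)$ if and only if it holds for $(\widehat R,\widehat M)$. For condition (i) there is nothing to check: the module $K_{\widehat M}^t=\H^t(\Hom_{\widehat R}(\widehat M,D_{\widehat R}^{\bullet}))$ is by definition an invariant of $\widehat M$ over $\widehat R$, and $\widehat R$, being complete, carries a dualizing complex, so $K_{\widehat M}^t$ is a well-defined finitely generated $\widehat R$-module and its Cohen-Macaulayness and dimension are computed over $\widehat R$ (see \ref{**}). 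Hence we may and do assume from the outset that $R$ is complete; in particular $R$ has a dualizing complex and $\widehat M=M$.

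Under this reduction the corollary is exactly Proposition~\ref{*} applied to $R$ and $M$, which asserts that $K_M^t$ is zero or Cohen-Macaulay of dimension $t$ if and only if $\H_{j}(\Lam_{\fm}(\H_{\fm}^t(M)))=0$ for all $j\neq t$ (Proposition~\ref{*} being itself a consequence of the Matlis-duality chain culminating in $\H_{j}(\Lam_{\fm}(\H_{\fm}^t(M)))\cong\Hom_R(\H_{\fm}^j(K_M^t),\E_R(R/\fm))$). The only step that needs any care is the completion isomorphism invoked in the reduction of (ii), which is routine given the Artinianness of $\H_{\fm}^t(M)$; I therefore do not anticipate any real obstacle, since the mathematical substance has already been established in Proposition~\ref{*} and the reduction mirrors the one already used for Corollary~\ref{*1}.
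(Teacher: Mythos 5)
Your proposal is correct and follows essentially the same route as the paper: reduce to the complete case via the isomorphism $\H_{j}(\Lam_{\fm}(\H_{\fm}^t(M)))\cong \H_{j}(\Lam_{\fm\widehat R}(\H_{\fm\widehat R}^t(\widehat M)))$ (noting that condition (i) already lives over $\widehat R$), and then invoke Proposition \ref{*}. The extra justification you give for the completion isomorphism (Artinianness of $\H_{\fm}^t(M)$ and flat base change) is consistent with the paper's citation and adds nothing problematic.
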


In \cite[Theorem 10.5.9]{SC2}, it is shown that $R$ is a $d$-dimensional Gorenstein ring if and only if $\H_{i}(\Lam_{\fm}(\E_R(k)))=0$ for all $i\neq d$ and $\H_{d}(\Lam_{\fm}(\E_R(k)))\cong \widehat{R}^{\fm}.$ So, in this direction we provide the following result:
\begin{thm}Let $(R,\fm)$ be a local ring of dimension $d$. and let $\fa$ be an ideal of $R$ . Then, the following statements are equivalent:
\begin{itemize}
\item[(i)]{{\emph{$R$} is Gorenstein}}
\item[(ii)]{{\emph{$\H_{i}(\Lam_{\fm}(\E_R(k)))=0$} for all $i\neq d$ and \emph{$\H_{d}(\Lam_{\fm}(\E_R(k)))\cong \widehat{R}^{\fm}.$}}}
\item[(iii)]{{There exist an ideal $\fa$ of $R$ with $\cd(\fa, R)=c$ such that \emph{$\H_{i}(\Lam_{\fa}(\E_R(k)))=0$} for all $i\neq c$ and \emph{$\fd_R(\H_{c}(\Lam_{\fa}(\E_R(k))))<\infty.$}}}
\end{itemize}
\begin{proof}The implication $(i)\Longrightarrow (ii)$ follows from Corollary \ref{Cor} and the fact that $R$ is Cohen-Macaulay and $\H_{\fm}^{d}(R)\cong \E_R(k)$.
The implication $(ii)\Longrightarrow (iii)$ is obvious. For the implication $(iii)\Longrightarrow (i)$, in view of \cite[Corollary 9.2.5]{SC2} one has $\H_{i}(\Lam_{\fa}(\E_R(k)))\cong\Hom_R(\H_{\fa}^i(R),\E_R(k))$ for all $i$. Therefore, $\H_{\fa}^i(R)=0$ for all $i\neq c$, and also $\H_{c}(\Lam_{\fa}(\E_R(k)))\cong\Hom_R(\H_{\fa}^c(R),\E_R(k))$. Hence, by our assumption $\fd_R(\Hom_R(\H_{\fa}^c(R),\E_R(k)))$ is finite. On the there hand, by \cite[Theorem 3.4]{MZ2}, we have $\H_{\fa}^i(\Hom_R(\H_{\fa}^c(R),\E_R(k)))=0$ for all $i\neq c$ and also by \cite[Lemma 3.5]{MZ2} one has $\H_{\fa}^c(\Hom_R(\H_{\fa}^c(R),\E_R(k)))\cong\E_R(k)$. Therefore, by \cite[Theorem 3.2]{MZ1}, $\fd_R(\E_R(k))$ is finite which it follows that $R$ is Gorenstein, as required.
\end{proof}
\end{thm}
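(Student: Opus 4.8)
The plan is to prove the equivalence as the cycle $(i)\Rightarrow(ii)\Rightarrow(iii)\Rightarrow(i)$, with essentially all the work in the last implication. For $(i)\Rightarrow(ii)$: a Gorenstein local ring is Cohen-Macaulay of dimension $d$, so $\H_{\fm}^{i}(R)=0$ for $i\neq d$ and $\H_{\fm}^{d}(R)\cong\E_R(k)$; applying Corollary~\ref{Cor} to $M=R$ gives $\H^{\fm}_i(\H_{\fm}^d(R))\cong\widehat{R}^{\fm}$ for $i=d$ and $0$ otherwise, and since $\H_{\fm}^{d}(R)$ is Artinian one may, by \cite[Proposition~4.1]{CN}, replace $\H^{\fm}_i(-)$ by $\H_i(\Lam_{\fm}(-))$ on it; substituting $\H_{\fm}^{d}(R)\cong\E_R(k)$ then gives exactly $(ii)$. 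For $(ii)\Rightarrow(iii)$: take $\fa=\fm$, so that $c=\cd(\fm,R)=\dim R=d$; the vanishing in $(ii)$ is the one required in $(iii)$, and $\H_{d}(\Lam_{\fm}(\E_R(k)))\cong\widehat{R}^{\fm}$ is flat, hence of flat dimension $0<\infty$.

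The substance is $(iii)\Rightarrow(i)$. First I would invoke the duality $\H_{i}(\Lam_{\fa}(\E_R(k)))\cong\Hom_R(\H_{\fa}^i(R),\E_R(k))$ of \cite[Corollary~9.2.5]{SC2}. Faithfulness of Matlis duality then turns the hypothesis into $\H_{\fa}^i(R)=0$ for $i\neq c$, i.e. $R$ is $\fa$-relative Cohen-Macaulay with $\cd(\fa,R)=c$, together with $\H_{c}(\Lam_{\fa}(\E_R(k)))\cong\Hom_R(\H_{\fa}^c(R),\E_R(k))=\D_{\fa}$; so the remaining hypothesis reads $\fd_R(\D_{\fa})<\infty$. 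Next I would compute the local cohomology of the relative canonical module, using \cite[Theorem~3.4]{MZ2} to get $\H_{\fa}^i(\D_{\fa})=0$ for $i\neq c$ and \cite[Lemma~3.5]{MZ2} to get $\H_{\fa}^c(\D_{\fa})\cong\E_R(k)$. Finally, since $\D_{\fa}$ is $\fa$-relative Cohen-Macaulay of cohomological dimension $c$ and of finite flat dimension, \cite[Theorem~3.2]{MZ1} transports finiteness of flat dimension to the unique nonvanishing local cohomology module, yielding $\fd_R(\E_R(k))<\infty$; and a local ring whose residue field has injective hull of finite flat dimension is Gorenstein, closing the cycle.

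The hard part is precisely this last chain $\fd_R(\D_{\fa})<\infty\Rightarrow\fd_R(\E_R(k))<\infty$: it needs both the exact identification $\H_{\fa}^c(\D_{\fa})\cong\E_R(k)$ and the principle that finite flat dimension of a relative Cohen-Macaulay module passes to its top local cohomology, which are exactly \cite[Lemma~3.5]{MZ2} and \cite[Theorem~3.2]{MZ1}; granting those, the argument is formal. One minor thing to watch is that $(ii)$ and $(iii)$ are formulated over $R$ and not over a completion, so before quoting the cited results one should note that the flat dimension in play and the relative Cohen-Macaulay property are insensitive to passing to the relevant $\fa$-adic (or $\fm$-adic) completion, e.g.\ via \cite[Corollary~2.2.5]{SC2}.
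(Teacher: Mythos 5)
Your proposal is correct and follows essentially the same route as the paper: the same use of Corollary \ref{Cor} (together with $\H_{\fm}^{d}(R)\cong\E_R(k)$) for $(i)\Rightarrow(ii)$, the choice $\fa=\fm$ for $(ii)\Rightarrow(iii)$, and for $(iii)\Rightarrow(i)$ the identical chain of citations \cite[Corollary 9.2.5]{SC2}, \cite[Theorem 3.4]{MZ2}, \cite[Lemma 3.5]{MZ2} and \cite[Theorem 3.2]{MZ1} leading to $\fd_R(\E_R(k))<\infty$ and hence Gorensteinness. The extra remarks you add (passing between $\H^{\fm}_i(-)$ and $\H_i(\Lam_{\fm}(-))$ on Artinian modules via \cite[Proposition 4.1]{CN}, and insensitivity to completion) are harmless elaborations of steps the paper leaves implicit.
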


$\mathbf{Acknowledgments}$. The author would like to thank Dr. Mohsen Gheibi and Prof. Hossein Zakeri for their valuable and profound comments during the preparation of the manuscript, and Prof. Kamaran Divaani-Aazar for having a helpful conversation with him.


\end{document}